\newtheorem{Theorem}{Theorem}[section]
\newtheorem{Lemma}[Theorem]{Lemma}
\newtheorem{Corollary}[Theorem]{Corollary}
\newtheorem{Proposition}[Theorem]{Proposition}
\theoremstyle{definition}
\newtheorem{Definition}[Theorem]{Definition}
\newtheorem{Example}[Theorem]{Example}
\theoremstyle{remark}
\newtheorem{Remark}{Remark}
\font\sy=cmsy10
\font\ym=msbm10  %In Ver.2, xm and ym should be replaced by am and bm.
\newcommand{\cA}{{\hbox{\sy A}}}
\newcommand{\cC}{{\hbox{\sy C}}}
\newcommand{\cD}{{\hbox{\sy D}}}
\newcommand{\cH}{{\hbox{\sy H}}}
\newcommand{\cK}{{\hbox{\sy K}}}
\newcommand{\cL}{{\hbox{\sy L}}}
\newcommand{\cM}{{\hbox{\sy M}}}
\newcommand{\cN}{{\hbox{\sy N}}}
\newcommand{\cO}{{\hbox{\sy O}}}
\newcommand{\cP}{{\hbox{\sy P}}}
\newcommand{\cQ}{{\hbox{\sy Q}}}
\newcommand{\cR}{{\hbox{\sy R}}}
\newcommand{\cS}{{\hbox{\sy S}}}
\newcommand{\cT}{{\hbox{\sy T}}}
\newcommand{\cV}{{\hbox{\sy V}}}
\newcommand{\cW}{{\hbox{\sy W}}}
\newcommand{\C}{{\text{\ym C}}}
\newcommand{\N}{\text{\ym N}}
\newcommand{\R}{\text{\ym R}}
\newcommand{\End}{\hbox{\rm End}}
\newcommand{\Hom}{\hbox{\rm Hom}}
\title[Multicategories of planar diagrams]
{Representations of multicategories of planar diagrams and tensor categories}
\author[Shigeru Yamagami]{}%{YAMAGAMI Shigeru}
\begin{document}
\maketitle
\begin{center}
Shigeru YAMAGAMI%\footnote{family name and given name is the traditional japanese style}

\end{center}
\begin{center}
Graduate School of Mathematics
\end{center}
\begin{center}
Nagoya University 
\end{center}
\begin{center} 
Nagoya, 464-8602, JAPAN 
\end{center}    

%\begin{abstract}
%\end{abstract}
%\bigskip
%\noindent{\bf Introduction} 
                            
%\baselineskip=18pt
\section*{}
We shall discuss how the notions of multicategories and their linear representations 
are related with tensor categories. 
When one focuses on the ones arizing from planar diagrams, 
it particularly implies that there is a natural one-to-one correspondence between 
planar algebras and singly generated bicategories. 

\section{Multicategories}
% So many terminologies are assigned to similar notions connected with rigidity on tensor categories: 
% autonomous, 
% rigid, closed, dual object, sovereign, pivotal, balanced, spherical and so on. 
% Roughly speaking, 
% \begin{gather*}
% \text{autonomous} = \text{rigid} \supset \text{closed},\\ 
% \text{pivotal} = \text{sovereign} = 
% \text{balanced} \supset \text{spherical}.
% \end{gather*}

%In what follows, we use  the word `tensor category' to stand for a linear monoidal category. 

\textbf{Multicategory} is a categorical notion which concerns a class of objects and 
morphisms so that morphisms are
enhanced to admit multiple objects as inputs, whereas outputs are kept to be single. 
The operation of composition can therefore be performed in a ramified way, 
which is referred to as \textbf{plugging} in what follows. 
The associativity axiom for plugging and the neutrality effect of identity morphisms 
enable us to visualize the result of repeated pluggings 
as a rooted tree (Figure~\ref{rtree}). 

\begin{figure}
\input{rtree.tpc}
\caption{}
\label{rtree}  
\end{figure}

As in the case of ordinary category, defined are functors 
as well as natural transformations and natural equivalences between them. 
We say that 
two multicategories $\cM$ and $\cN$ are \textbf{equivalent} if 
we can find functors $F: \cM \to \cN$ and $G: \cN \to \cM$ 
so that their compositions $F\circ G$ and $G\circ F$ are 
naturally equivalent to identitiy functors. 
 
% \begin{Proposition}
% A functor $F: \cM \to \cN$ gives an equivalence of multicategories if and only 
% if $F$ is fully faithful (bijectivity in the morphism level) and 
% saturated (essential surjectivity on the image of objects).   
% \end{Proposition}

\begin{Example}
The multicategory $\cM\cS et$ of sets (and maps) and 
the multicategory $\cM\cV ec$ of vector spaces (and multilinear maps). 
\end{Example}

Given a (strict) monoidal category $\cC$, we define a multicategory $\cM$ so that 
$\cC$ and $\cM$ have the same class of objects and 
$\Hom(X_1\times \dots \times X_d,X) = \Hom(X_1\otimes \dots \otimes X_d,X)$. 

\begin{Proposition}
Let $\cC'$ be another monoidal category with $\cM'$ the associated multicategory. 
Then a multicategory-functor $\cM \to \cM'$ is in a one-to-one correspondence with 
a weakly monoidal functor $\cC \to \cC'$. Here by a weakly monoidal functor we shall mean 
a functor $F: \cC \to \cC'$ 
with a natural family of morphisms $m_{X,Y}: F(X)\otimes F(Y) \to F(X\otimes Y)$ 
satisfying the hexagonal identities for associativity. 
\end{Proposition}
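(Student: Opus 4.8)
The plan is to produce two mutually inverse assignments, one sending a multicategory-functor $G:\cM\to\cM'$ to a weakly monoidal functor $(F,m):\cC\to\cC'$, the other going back. The construction of $\cM$ from $\cC$ supplies, for every finite list $X_1,\dots,X_d$ of objects, a distinguished $d$-ary morphism $\mu_{X_1,\dots,X_d}:X_1\times\dots\times X_d\to X_1\otimes\dots\otimes X_d$, namely the one corresponding under $\Hom(X_1\times\dots\times X_d,X_1\otimes\dots\otimes X_d)=\Hom_{\cC}(X_1\otimes\dots\otimes X_d,X_1\otimes\dots\otimes X_d)$ to the identity. Two elementary observations will be used throughout: (a) every $f:X_1\times\dots\times X_d\to X$ in $\cM$ is recovered as the plugging $f=\bar f\circ\mu_{X_1,\dots,X_d}$ of $\mu_{X_1,\dots,X_d}$ into the single input of the unary morphism $\bar f\in\Hom_{\cC}(X_1\otimes\dots\otimes X_d,X)$, where $\bar f$ is $f$ read as a $\cC$-morphism; (b) $\mu_{X_1,\dots,X_d}$ is an iterated plugging of binary $\mu$'s — for instance $\mu_{X,Y,Z}$ is $\mu_{X,Y}$ plugged into the first input of $\mu_{X\otimes Y,Z}$ — as one checks directly from the definition of plugging. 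Together these say that $\cM$ is generated as a multicategory by the unary morphisms of $\cC$ and the morphisms $\mu_{X,Y}$.

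Given $G$, let $F$ be the restriction of $G$ to objects and to unary morphisms; since plugging of unary morphisms is ordinary composition in $\cC$, this is a functor $\cC\to\cC'$. Set $m_{X,Y}:=G(\mu_{X,Y})$, read via $\Hom(F(X)\times F(Y),F(X\otimes Y))=\Hom_{\cC'}(F(X)\otimes F(Y),F(X\otimes Y))$ as a morphism $F(X)\otimes F(Y)\to F(X\otimes Y)$. Naturality of $m$ is obtained by applying $G$ to the following identity in $\cM$: for $f:X\to X'$ and $g:Y\to Y'$, plugging $f$ and $g$ into the two inputs of $\mu_{X',Y'}$ gives the same binary morphism as plugging $\mu_{X,Y}$ into the single input of $f\otimes g$, both being the binary morphism attached to $f\otimes g$; reading the outcome in $\cC'$ gives the naturality square. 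For the hexagonal identity — here, $\cC$ and $\cC'$ being strict, an associativity square — note that plugging $\mu_{X,Y}$ into the first input of $\mu_{X\otimes Y,Z}$ and plugging $\mu_{Y,Z}$ into the second input of $\mu_{X,Y\otimes Z}$ both produce $\mu_{X,Y,Z}$, hence coincide in $\cM$; applying $G$ and using that $G$ preserves plugging, the two sides become, in $\cC'$, $m_{X\otimes Y,Z}\circ(m_{X,Y}\otimes\mathrm{id})$ and $m_{X,Y\otimes Z}\circ(\mathrm{id}\otimes m_{Y,Z})$, which are therefore equal. Thus $(F,m)$ is weakly monoidal.

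Conversely, given $(F,m)$, the hexagonal identity together with naturality of $m$ yields, by induction on $d$, a coherent system of iterated structure morphisms $m^{(d)}_{X_1,\dots,X_d}:F(X_1)\otimes\dots\otimes F(X_d)\to F(X_1\otimes\dots\otimes X_d)$ with $m^{(1)}=\mathrm{id}$ and $m^{(2)}=m$, natural in the $X_i$ and satisfying the grouping law $m^{(n)}_{Y_1,\dots,Y_n}\circ(m^{(k_1)}\otimes\dots\otimes m^{(k_n)})=m^{(k_1+\dots+k_n)}$ whenever $Y_i$ is the tensor product of the $i$-th block of arguments; this is the coherence theorem for lax monoidal functors. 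Define $G$ by $G(X)=F(X)$ on objects and, on a $d$-ary morphism $f:X_1\times\dots\times X_d\to X$, by letting $G(f)$ be the $d$-ary morphism of $\cM'$ corresponding to $F(\bar f)\circ m^{(d)}_{X_1,\dots,X_d}$. Preservation of identities is immediate from $m^{(1)}=\mathrm{id}$. For preservation of plugging, expand $G(f\circ(g_1,\dots,g_n))$ and $G(f)\circ(G(g_1),\dots,G(g_n))$, translate both into $\cC'$, and compare using functoriality of $F$, functoriality of $\otimes$ in each variable, naturality of $m^{(n)}$, and the grouping law above; this bookkeeping is the most calculation-heavy part but is entirely formal.

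It remains to see that the two assignments are mutually inverse. Starting from $G$, forming $(F,m)$, and forming $G'$ from it: for $f:X_1\times\dots\times X_d\to X$, the morphism $G'(f)$ corresponds to $F(\bar f)\circ m^{(d)}=G(\bar f)\circ G(\mu_{X_1,\dots,X_d})=G(\bar f\circ\mu_{X_1,\dots,X_d})=G(f)$, where (b) and preservation of plugging identify $G(\mu_{X_1,\dots,X_d})$ with $m^{(d)}$; hence $G'=G$. Starting from $(F,m)$, forming $G$, and forming $(F',m')$ from it: on unary morphisms $G$ reduces to $F$, so $F'=F$; and $m'_{X,Y}=G(\mu_{X,Y})$ corresponds to $F(\mathrm{id}_{X\otimes Y})\circ m^{(2)}_{X,Y}=m_{X,Y}$, so $(F',m')=(F,m)$. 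The genuine obstacle in all of this is the coherence step in the converse direction — establishing that the hexagonal identities really pin down $m^{(d)}$ and force the grouping law — after which every remaining verification is a routine diagram chase. (Throughout, a multimorphism is taken to have at least one input, which is exactly why no unit datum $I'\to F(I)$ figures in the notion of weakly monoidal functor employed here.)
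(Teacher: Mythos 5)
Your proposal is correct and follows essentially the same route as the paper: the structure maps are defined by $m_{X,Y}=G(\mu_{X,Y})$ (the paper's $\widetilde F(1_{X\otimes Y})$), the inverse construction is $G(f)=F(\bar f)\circ m^{(d)}$ (the paper's $F(T)\circ m$), and naturality plus the associativity square come from applying $G$ to the same two plugging identities the paper draws diagrammatically. The only differences are presentational: you additionally spell out the mutual-inverse check and flag the coherence/grouping law for $m^{(d)}$ and the nullary-input caveat, all of which the paper leaves implicit.
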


\begin{proof}
Given a weakly monoidal functor $F: \cC \to \cC'$, we extend it to 
a multicategory-functor $\widetilde F: \cM \to \cM'$ by the composition 
\[
\widetilde F(T) = \left(
  \begin{CD}
F(X_1)\otimes \dots \otimes F(X_l) @>{m}>> F(X_1\otimes \dots \otimes X_l) 
@>{F(T)}>> F(X)
  \end{CD}
\right) 
\]
with $T \in \Hom_\cM(X_1,\dots, X_l; X) = \Hom_\cC(X_1\otimes \dots \otimes X_l,X)$. 
Then $\widetilde F$ is multiplicative: 
Let $T_j: X_{j,1}\otimes \dots \otimes X_{j,d_j} \to X_j$ ($j=1, \dots, l$) and consider 
the composition $T\circ(T_1, \dots, T_l)$. By definition, 
$\widetilde F(T)\circ({\widetilde F}(T_1),\dots,{\widetilde F}(T_l))$ is given by 
\[
\begin{CD}
F(X_{1,1})\otimes \dots \otimes F(X_{1,d_1}) \otimes \dots \otimes  
F(X_{l,1})\otimes \dots \otimes F(X_{l,d_l})\\ 
@V{m\otimes \dots \otimes m}VV\\ 
F(X_{1,1}\otimes \dots \otimes X_{1,d_1}) \otimes \dots \otimes 
F(X_{l,1}\otimes \dots \otimes X_{l,d_l})\\ 
@V{F(T_1)\otimes \dots \otimes  F(T_l)}VV\\
F(X_1)\otimes \dots \otimes F(X_l)\\
@V{m}VV\\
F(X_1\otimes \dots \otimes X_l)\\ 
@V{F(T)}VV\\
F(X)
\end{CD} 
\]
By the commutativity of 
\[
\begin{CD}
F(X_{1,1}\dots X_{1,d_1})\dots F(X_{l,1}\dots X_{l,d_l}) 
@>{m}>> F(X_{1,1}\dots X_{1,d_1}\dots X_{l,1}\dots X_{l,d_l})\\
@V{F(T_1)\otimes \dots \otimes F(T_l)}VV @VV{F(T_1\otimes \dots \otimes T_l)}V\\
F(X_1)\dots F(X_l) @>>{m}> F(X_1\dots X_l)
\end{CD} 
\]
and the identity $m\circ (m\otimes \dots \otimes m) = m$, 
$\widetilde F(T)\circ ({\widetilde F}(T_1),\dots, {\widetilde F}(T_l))$ is 
identical with the composition
\[
\begin{CD}
F(X_{1,1})\otimes \dots \otimes F(X_{1,d_1}) \otimes \dots \otimes  
F(X_{l,1})\otimes \dots \otimes F(X_{l,d_l})\\ 
@VVV\\
F(X_{1,1}\otimes \dots \otimes X_{1,d_1} \otimes \dots \otimes 
X_{l,1}\otimes \dots \otimes X_{l,d_l})\\ 
@VVV\\
F(X_1\otimes \dots \otimes X_l)\\
@VVV\\
F(X)
\end{CD}, 
\]
which is equall to $\widetilde F(T\circ(T_1,\dots, T_l))$. 

Conversely, starting with a multicategory-functor $\widetilde F: \cM \to \cM'$, 
let $F: \cC \to \cC'$ be the restriction of $\widetilde F$ and set 
\[
m_{X,Y} = \widetilde F(1_{X\otimes Y}): F(X)\otimes F(Y) \to F(X\otimes Y). 
\]
Here $1_{X\otimes Y}$ in the argument of $\widetilde F$ is regarded as a morphism 
in $\Hom_\cM(X,Y;X\otimes Y) = \End_\cC(X\otimes Y)$. 
The commutativity of 
\[
\begin{CD}
F(X)\otimes F(Y) @>{F(f)\otimes F(g)}>> F(X')\otimes F(Y')\\
@V{m_{X,Y}}VV @VV{m_{X',Y'}}V\\
F(X\otimes Y) @>>{F(f\otimes g)}> F(X'\otimes Y')
\end{CD}
\]
follows from 
\[
%F(f\otimes g)\circ m_{X,Y} = 
F\left( \input{dipod.tpc} \right) = \input{dipod2.tpc} 
%= m_{X',Y}\circ (F(f)\otimes F(g))
\]
and the associativity of $m$ (the hexagonal identities), i.e., 
the commutativity of 
\[
\begin{CD}
F(X)\otimes F(Y)\otimes F(Z) @>{m_{X,Y}}>> F(X\otimes Y)\otimes F(Z)\\
@V{m_{Y,Z}}VV @VV{m_{X\otimes Y,Z}}V\\
F(X)\otimes F(Y\otimes Z) @>>{m_{X,Y\otimes Z}}> F(X\otimes Y\otimes Z)
\end{CD} 
\]
is obtained if we apply $\widetilde F$ to the identity
\[
\left( 
\input{tripod.tpc} 
\right) 
\quad
= 
\quad
\left( 
\input{tripod2.tpc} 
\right). 
\]
\end{proof}

\begin{Definition}
A (linear) \textbf{representation} of a multicategory $\cM$ is just 
a functor $F: \cM \to \cM\cV ec$. 
A representation is equivalently described in terms of a family $\{ V_X \}$ of vector spaces 
indexed by objects of $\cM$ together with a family of multilinear maps 
$\{ \pi_T: V_{X_1}\times \dots \times V_{X_d} \to V_X \}$ indexed by morphisms 
in $\cM$ (satisfying certain relations for multiplicativity). 
% which is referred to as an \textbf{$\mathbf{\cM}$-module}. 

An intertwiner between two representations $\{\pi_T, V_X\}$ and 
$\{\pi'_T,  V'_X\}$ 
is defined to be a natural 
linear transformation, which is specified by a family of linear maps 
$\{ \varphi_X: V_X \to V'_X\}$ making the following diagram commutative 
for each morphism $T: X_1\times \dots \times X_d \to X$ in $\cM$: 
\[
\begin{CD}
V_{X_1}\times \dots \times V_{X_d} @>{\pi_T}>> V_X\\
@V{\varphi_{X_1}\times \dots \times \varphi_{X_d}}VV @VV{\varphi_X}V\\
V'_{X_1}\times \dots \times V'_{X_d} @>>{\pi'_T}> V'_X
\end{CD}
\]
%In terms of categorical terminology, 
If $\cM$ is a small multicategory (i.e., objects of $\cM$ form a set), 
representations of $\cM$ 
constitute a category $\cR ep(\cM)$ whose objects are representations and 
morphisms are intertwiners. 
\end{Definition}

Let $F: \cM \to \cN$ be a functor between small multicategories. 
By pulling back, we obtain a functor $F^*: \cR ep(\cN) \to \cR ep(\cM)$;  
given a representation $(\pi,V)$ of $\cN$, 
$F^*(\pi,V) = (F^*\pi, F^*V)$ is the representation of $\cM$ defined by 
$(F^*V)_X = V_{F(X)}$ and $(F^*\pi)_T = \pi_{F(T)}$. 

If $\phi: F \to G$ is a natural transformation 
$\{ \phi_X: F(X) \to G(X) \}$ with $G: \cM \to \cN$ 
another functor, it induces a natural transformation 
$\varphi:F^* \to G^*$: 
Let $(\pi,V)$ be a representation of $\cN$. Then 
$\varphi_{(\pi,V)}: F^*(\pi,V) \to G^*(\pi,V)$ 
is an intertwiner between representations of $\cM$ defined by 
the family $\pi(\phi) = \{ \pi_{\phi_X}: V_{F(X)} \to V_{G(X)} \}$ of linear maps. 

By the multiplicativity of $\pi$, 
the correspondence $\phi \to \varphi$ is multiplicative as well and 
the construction is summarized to be defining 
a functor $\cH om(\cM, \cN) \to \cH om(\cR ep(\cN), \cR ep(\cM))$. 

\begin{Proposition}
The family of functors 
\[
\cH om(\cM,\cN) \to \cH om(\cR ep(\cN), \cR ep(\cM))
\]
for various multicategories $\cM$ and $\cN$ defines a anti-multiplicative meta-functor of 
strict bicategories: $(F\circ G)^* = G^*\circ F^*$ for 
$F: \cM \to \cN$ and $G: \cL \to \cM$. 
% Let $\cL$, $\cM$, $\cN$ and so on be multicategories. 
% We have a natural homomorphism between bicategories 
% $\cH om(\cM, \cN) \to \cH om(\cR ep(\cN), \cR ep(\cM))$: 
% Given a representation $(\pi,V)$ of $\cN$ and a functor 
% $F: \cM \to \cN$, $F^*(\pi,V) = (F^*\pi,F^*V)$ is a representation of 
% $\cM$ defined by $(F^*V)_X = V_{F(X)}$ and $(F^*\pi)_T = \pi_{F(T)}$. 
%  there is a one-to-one 
% correspondence between representations of $\cM$ and 
% representations of $\cN$.  
\end{Proposition}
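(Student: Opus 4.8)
The plan is to set up the two (meta-)$2$-categories and then read every required identity off the definition of the pullback; there is no analytic or algebraic content here, only bookkeeping of variances. Let $\mathbf{MCat}$ denote the meta-$2$-category of small multicategories, multicategory-functors and natural transformations with its evident strict compositions, and let $\mathbf{Cat}$ be the analogous meta-$2$-category of categories, functors and natural transformations. I claim that the following data constitute a meta-functor of strict bicategories: the object assignment $\cM\mapsto\cR ep(\cM)$, and the functors
\[
R_{\cM,\cN}\colon \cH om(\cM,\cN)\longto\cH om(\cR ep(\cN),\cR ep(\cM))
\]
of the preceding discussion, which send $F$ to $F^*$ and a natural transformation $\phi$ to the induced intertwiner-transformation; moreover this meta-functor reverses the direction of $1$-cells --- which is what ``anti-multiplicative'' means here --- while preserving the direction of $2$-cells. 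Since $\mathbf{MCat}$ and $\mathbf{Cat}$ are strict, it is enough to check: (i) strict functoriality on $1$-cells, that is, $(F\circ G)^*=G^*\circ F^*$ for $F\colon\cM\to\cN$, $G\colon\cL\to\cM$, and $(1_\cM)^*=1_{\cR ep(\cM)}$; and (ii) that each $R_{\cM,\cN}$ is compatible with whiskering a $2$-cell by a $1$-cell on either side. Since $R_{\cM,\cN}$ is already known to be a functor, hence to respect vertical composition and identity $2$-cells, (ii) automatically upgrades to compatibility with arbitrary horizontal composition of $2$-cells, a horizontal composite being a vertical composite of two whiskerings.

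For (i) I would just unwind the definition of the pullback. A representation of $\cN$ is a pair $(\pi,V)$, and for an object $X$ and a morphism $T$ of $\cL$ one has
\begin{gather*}
((F\circ G)^*V)_X=V_{F(G(X))}=(F^*V)_{G(X)}=((G^*\circ F^*)V)_X,\\
((F\circ G)^*\pi)_T=\pi_{F(G(T))}=((G^*\circ F^*)\pi)_T,
\end{gather*}
so $(F\circ G)^*(\pi,V)$ and $(G^*\circ F^*)(\pi,V)$ are literally the same representation of $\cL$; and an intertwiner $\{\varphi_Y\}$ between representations of $\cN$ is carried by either functor to the family $\{\varphi_{F(G(X))}\}_X$. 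Likewise $(1_\cM)^*$ is visibly $1_{\cR ep(\cM)}$. Hence $(-)^*$ is strictly contravariant on objects and $1$-cells.

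For (ii), fix $\phi\colon F\Rightarrow F'$ with $F,F'\colon\cM\to\cN$. If $H\colon\cN\to\cP$, the left whiskering $H\phi\colon H\circ F\Rightarrow H\circ F'$ has $X$-component $H(\phi_X)$, so for a representation $(\pi,V)$ of $\cP$ the intertwiner $R_{\cM,\cP}(H\phi)_{(\pi,V)}$ is the family $\{\pi_{H(\phi_X)}\}_X=\{(H^*\pi)_{\phi_X}\}_X$; by (i) one has $(H\circ F)^*=F^*\circ H^*$, and this family is precisely the value at $(\pi,V)$ of the right-whiskered $2$-cell $R_{\cM,\cN}(\phi)\,H^*$, whence $R_{\cM,\cP}(H\phi)=R_{\cM,\cN}(\phi)\,H^*$. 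Dually, if $G\colon\cL\to\cM$, the right whiskering $\phi G\colon F\circ G\Rightarrow F'\circ G$ has $Y$-component $\phi_{G(Y)}$, so $R_{\cL,\cN}(\phi G)_{(\pi,V)}=\{\pi_{\phi_{G(Y)}}\}_Y$ is the pullback along $G$ of the intertwiner $R_{\cM,\cN}(\phi)_{(\pi,V)}=\{\pi_{\phi_Z}\}_Z$, i.e.\ the value at $(\pi,V)$ of the left-whiskered $2$-cell $G^*R_{\cM,\cN}(\phi)$, whence $R_{\cL,\cN}(\phi G)=G^*R_{\cM,\cN}(\phi)$. Together with the functoriality of each $R_{\cM,\cN}$ in vertical composition, these two identities show that $R$ takes horizontal composition of $2$-cells to horizontal composition in the reversed order, which finishes the verification.

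The only step calling for care --- and the ``main obstacle'' here is purely bureaucratic --- is keeping straight that $(-)^*$ reverses $1$-cells but not $2$-cells, so that left whiskering in $\mathbf{MCat}$ is exchanged with right whiskering in $\mathbf{Cat}$ and conversely. Once the indices are spelled out, every square that must commute does so because its two legs evaluate to the same indexed family of linear maps.
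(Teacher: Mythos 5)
Your proof is correct and takes the same route the paper implicitly does: the paper states this Proposition without any proof, treating it as immediate from the preceding construction of $F^*$ and of the induced transformation $\varphi$, and your argument simply unwinds those definitions in full, including the whiskering compatibilities and the interchange-law reduction that the paper leaves tacit. There is no gap; the extra care about which side of the whiskering gets exchanged under the contravariance is exactly the right bookkeeping.
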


\begin{Corollary}
If small multicategories $\cM$ and $\cN$ are equivalent, then 
so are their representation categories $\cR ep(\cM)$ and $\cR ep(\cN)$. 
\end{Corollary}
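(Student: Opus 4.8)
The plan is to deduce the Corollary purely formally from the preceding Proposition, which asserts that pulling back is an anti-multiplicative meta-functor of strict bicategories. Suppose $\cM$ and $\cN$ are equivalent, so that we are given functors $F: \cM \to \cN$ and $G: \cN \to \cM$ together with natural equivalences $G\circ F \simeq 1_\cM$ and $F\circ G \simeq 1_\cN$. First I would apply the relevant instances of the functor $\cH om(\cM,\cN) \to \cH om(\cR ep(\cN),\cR ep(\cM))$ to obtain the pulled-back functors $F^*: \cR ep(\cN) \to \cR ep(\cM)$ and $G^*: \cR ep(\cM) \to \cR ep(\cN)$.

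Next, I would record two immediate observations. From the definition of the pull-back construction, $(1_\cM)^* = 1_{\cR ep(\cM)}$ and $(1_\cN)^* = 1_{\cR ep(\cN)}$, since $(1^*V)_X = V_X$ and $(1^*\pi)_T = \pi_T$. From the anti-multiplicativity clause of the Proposition, $F^*\circ G^* = (G\circ F)^*$ and $G^*\circ F^* = (F\circ G)^*$ as functors between representation categories.

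It then remains to see that the meta-functor carries the natural equivalence $G\circ F \simeq 1_\cM$ to a natural equivalence $(G\circ F)^* \simeq (1_\cM)^* = 1_{\cR ep(\cM)}$, and similarly that $F\circ G \simeq 1_\cN$ is sent to $G^*\circ F^* \simeq 1_{\cR ep(\cN)}$. This is where the ``main obstacle'' lies, though it is slight: the point is simply that $\cH om(\cM,\cM) \to \cH om(\cR ep(\cM),\cR ep(\cM))$ is an honest functor between categories and hence preserves isomorphisms, while a natural equivalence between multicategory-functors is precisely an isomorphism in the hom-category $\cH om(\cM,\cM)$; its image is therefore an isomorphism in $\cH om(\cR ep(\cM),\cR ep(\cM))$, that is, a natural equivalence of endofunctors of $\cR ep(\cM)$. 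Concretely, one checks that the intertwiner components built from $\pi$ applied to the (invertible) natural transformation $\phi: G\circ F \to 1_\cM$ are invertible because $\pi$ is multiplicative and $\pi_{1_X}$ is the identity.

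Combining these, $G^*$ and $F^*$ are mutually inverse up to natural equivalence, so $\cR ep(\cM)$ and $\cR ep(\cN)$ are equivalent, as claimed.
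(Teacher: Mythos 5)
Your proposal is correct and follows essentially the same route as the paper: it invokes the anti-multiplicativity $(F\circ G)^* = G^*\circ F^*$ from the preceding Proposition and the fact that the meta-functor, being a functor on hom-categories, carries the natural equivalences $F\circ G\cong \mathrm{id}_{\cN}$ and $G\circ F\cong \mathrm{id}_{\cM}$ to natural equivalences of the pulled-back endofunctors. The paper's proof is just a terser version of the same argument; your added remarks on $(1_{\cM})^*=1_{\cR ep(\cM)}$ and on why invertible natural transformations pull back to invertible intertwiners merely make explicit what the paper leaves implicit.
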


\begin{proof}
If an equivalence between $\cM$ and $\cN$ is given by functors 
$F: \cM \to \cN$ and $G: \cN \to \cM$ with $F\circ G \cong \text{id}_{\cN}$ 
and $G\circ F \cong \text{id}_{\cM}$, then 
$G^*\circ F^* = (F\circ G)^* \cong \text{id}_{\cR ep(\cN)}$ 
and $F^*\circ G^* = (G\circ F)^* \cong \text{id}_{\cR ep(\cM)}$ 
show the equivalence between $\cR ep(\cM)$ and $\cR ep(\cN)$. 
\end{proof}

As observed in \cite{Gho}, the multicategory $\cM\cV ec$ admits 
a special object; the vector space of the ground field itself, 
which plays the role of unit when multiple objects are regarded 
as products. In the multicategory $\cM\cS et$, the special object in this sense 
is given by any one-point set. 
Multicategories of planar diagrams to be discussed shortly also admit 
such special objects; disks or boxes without pins. 
It is therefore natural to impose the condition that $V_S$ 
is equal to the ground field for a special object $S$. 

It is quite obvious to introduce other enhanced categories of similar flabor: 
co-multicategories and bi-multicategories with hom-sets indicated by 
\[
\Hom(X;X_1, \dots, X_d), 
\quad 
\Hom(X_1, \dots, X_m; Y_1, \dots, Y_n)
\]
respectively.

\section{Planar Diagrams}
We introduce several multicategories related with 
planar diagrams (namely, tangles without crossing points). 

\subsection{Disk Type}
Let $n$ be a non-negative integer. By a disk of type $n$ 
(or simply an $n$-disk), we shall mean a disk 
with $n$ pins attached on the peripheral and 
numbered consecutively from $1$ to $n$ anticlockwise. 

Our first example of multicategories has $n$-disks 
for various $n$ as objects with morphisms given by 
planar diagrams connecting pins inside the multiply punctured region of 
the target object (disk), Figure~\ref{region}. 
The multicategory obtained in this way is denoted by 
$\cD_\circ$ and called the multicategory of planar diagrams of disk type. 
The identity morphisms are given by diagrams consisting of spokes (Figure~\ref{spoke}). 

\begin{figure}[h]
\input{region.tpc}
\caption{}
\label{region}
\end{figure}
\begin{figure}[h]
\input{pot-spoke.tpc}
\caption{}
\label{spoke}
\end{figure}

\subsection{Box Type}
Let $m,n \in \N$ be non-negative integers. 
By a box of type $(m,n)$ or simply 
an $(m,n)$-box, we shall mean a rectangular box 
with $m$ pins and $n$ pins attached on 
the lower and upper edges respectively. 
Visually, the distinction of lower and upper edges can be 
indicated by putting an arrow from bottom to top. 

\begin{figure}[h]
\input{pot-box.tpc}
\end{figure}

The second example of multicategory has 
$(m,n)$-boxes for various $m, n$ as objects. 
For a pictorial description of morphisms, we distinguish 
boxes depending on whether it is used for outputs or inputs; 
%Boxes are divided into two classes; 
outer or inner boxes, 
where pins are sticking out inward or outward respectively. 
For outer boxes, arrows are often omitted. 
When $m=n$, the box is said to be diagonal. 

By a \textbf{planar $(m,n)$-diagram} 
or simply an \textbf{$(m,n)$-diagram}, 
we shall mean 
a planar arrangement of inner boxes and curves (called strings) 
inside an outer $(m,n)$ box with 
each endpoint of strings connected to exactly one pin sticking out of 
inner or outer boxes so that no pins are left free. 
% (Endpoints of strings and pins of boxes are in a perfect match.) 
We shall not ditinguish two $(m,n)$-diagrams which are planar-isotopic.  
% as in the case of planar diagrams. 

\begin{figure}[h]
\input{pot-tangle.tpc}  
\end{figure}

If inner boxes are distinguished by numbers $1, \dots, d$, we have a sequence of 
their types $((m_1,n_1), \dots, (m_d,n_d))$. When all relevant boxes 
are diagonal, the diagram is said to be diagonal. 

Multicategory morphisms are then given by planar diagrams
with the following operation of \textbf{plugging} (or nesting): 
Let $T$ be a planar $(m,n)$-diagram containing boxes of inner type $((m_j,n_j))_{1 \leq j \leq d}$ and 
$T_j$ be an $(m_j,n_j)$-diagram ($1 \leq j \leq d$). 
Then the plugging of 
$T_j$ into $T$ results in a new $(m,n)$-diagram, which is denoted by 
$T\circ(T_1\times \dots \times T_d)$. 
Note that the plugging produces diagonal planar diagrams out of diagonal ones. 

The plugging operation satisfies the associativity 
and we obtain a multicategory $\cD_\square$, 
which is referred to as 
the \textbf{multicategory of planar diagrams} of box type. 
Here identity morphisms are given by parallel vertical lines. 
Note that two objects (boxes) are isomorphic if and only if they have 
the same number $m+n$ of total pins. 

When objects are restricted to disks or boxes having even number of pins, 
we have submulticategories $\cD_\circ^{\text{even}}$ and 
$\cD_\square^{\text{even}}$. 

If boxes (objects) are further 
restricted to diagonal ones in $\cD_\square^{\text{even}}$, then we obtain 
another submulticategory $\cD_\triangle$  
as a subcategory of $\cD_\square$. 

\begin{Proposition}
Two multicategories $\cD_\circ$, $\cD_\square$ are equivalent. 
Three multicategories $\cD_\circ^{\text{even}}$, $\cD_\square^{\text{even}}$ and 
$\cD_\triangle$ are equivalent, whence they produce 
equivalent representation categories. 
% In view of these, we shall use $\cP$ to stand for one of 
% $\cP_\circ$ and $\cP_\square$. Likewise $\cP_\Delta$ is used to denote 
% $\cP_\circ^{\text{even}}$ or $\cP_\square^{\text{even}}$ 
% depending on the context. 
\end{Proposition}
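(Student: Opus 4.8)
The plan is to exhibit $\cD_\circ$ as the ``rounded'' version of $\cD_\square$. Fix, for every pair $(m,n)$, the bijection $\s_{m,n}$ obtained by traversing the boundary circle of an $(m,n)$-box and listing the $m$ lower pins together with the $n$ upper pins in the order they are met, and then declaring this list to be the pins $1,\dots,m+n$ of an $(m+n)$-disk arranged anticlockwise (with $\s_{0,n}$ the identity). This defines a functor $\Phi\colon\cD_\square\to\cD_\circ$: on objects it sends the $(m,n)$-box to the $(m+n)$-disk; on a morphism, i.e.\ a planar $(m,n)$-diagram, it replaces the outer box and each inner box by a disk, keeps the strings, and relabels the pins through the relevant $\s$'s. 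Because the relabelling is performed identically on inner and outer boxes, parallel vertical lines are carried to spokes, so $\Phi$ preserves identities; and because deforming boxes to disks commutes with nesting, $\Phi$ preserves plugging. Hence $\Phi$ is a multicategory-functor.

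First I would note that $\Phi$ is essentially surjective, since every $n$-disk equals $\Phi$ of the $(0,n)$-box. The heart of the matter is that $\Phi$ is a bijection on each multi-hom-set $\Hom_{\cD_\square}(A_1,\dots,A_d;B)\to\Hom_{\cD_\circ}(\Phi A_1,\dots,\Phi A_d;\Phi B)$: once the input types $A_i$ and the output type $B$ are fixed, a planar diagram up to isotopy is nothing more than its combinatorial pattern of non-crossing connections among a prescribed number of pins on a prescribed number of holes, and the box-versus-disk shape of the holes carries no extra data, because the positions of the corners on a disk boundary are recovered canonically from the chosen type (which specifies which consecutive pins constitute the lower edge). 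Thus ``un-rounding'' is a well-defined two-sided inverse of $\Phi$ on morphisms (for $d=0$ this is the familiar bijection between non-crossing matchings of points on a segment and on a circle, both empty when the number of points is odd). A fully faithful, essentially surjective multicategory-functor admits a quasi-inverse in the sense of the definition; concretely one may take $\Psi\colon\cD_\circ\to\cD_\square$ with $n\text{-disk}\mapsto(0,n)\text{-box}$, so that $\Phi\circ\Psi=\mathrm{id}_{\cD_\circ}$, together with a natural equivalence $\Psi\circ\Phi\cong\mathrm{id}_{\cD_\square}$ whose component at the $(m,n)$-box is the isomorphism that bends its $m$ lower pins round to the top — invertible by the usual straightening isotopy, and natural because bending the pins of an outcome agrees with bending the pins of each constituent and reassembling. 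Therefore $\cD_\circ\simeq\cD_\square$.

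For the even versions the same $\Phi$ restricts to a functor $\cD_\square^{\mathrm{even}}\to\cD_\circ^{\mathrm{even}}$ ($m+n$ even forces the resulting disk to be even), which is again essentially surjective and again bijective on multi-hom-sets by the identical argument, hence an equivalence. Finally $\cD_\triangle$ is a full sub-multicategory of $\cD_\square^{\mathrm{even}}$, and it is essentially surjective there: any $(m,n)$-box with $m+n=2k$ is isomorphic in $\cD_\square^{\mathrm{even}}$ to the diagonal $(k,k)$-box, since two boxes with the same total number of pins are isomorphic and an isomorphism between boxes with $2k$ pins is itself a morphism of $\cD_\square^{\mathrm{even}}$. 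A fully faithful, essentially surjective inclusion is an equivalence — a quasi-inverse sends $(m,n)\mapsto(k,k)$ and conjugates multimorphisms by the chosen isomorphisms, which is functorial because isomorphisms compose. Composing the three equivalences gives $\cD_\circ^{\mathrm{even}}\simeq\cD_\square^{\mathrm{even}}\simeq\cD_\triangle$; all the multicategories in sight are small, so the Corollary yields the asserted equivalences of representation categories.

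The step I expect to be the real obstacle is not conceptual but organisational: nailing down the cyclic-order conventions $\s_{m,n}$ so that $\Phi$ is literally identity-preserving on the nose, and making the principle ``a planar diagram up to isotopy is its connection pattern'' precise enough that the bijectivity of $\Phi$ on morphisms, together with the naturality of the bending isomorphisms, are genuinely verified rather than merely read off the pictures.
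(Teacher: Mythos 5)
Your proposal is correct and follows essentially the same route as the paper: the paper likewise takes the obvious rounding functors $\cD_\square\to\cD_\circ$ (and their even restrictions) together with the inclusion $\cD_\triangle\to\cD_\square^{\text{even}}$, observes they are fully faithful, and establishes essential surjectivity via exactly the string-bending isomorphisms $S,T$ with $S\circ T$ and $T\circ S$ equal to identities. The paper's proof is terser — it does not spell out the explicit quasi-inverses or the naturality of the bending isomorphisms, which you rightly flag as the part requiring care — but the underlying argument is the same.
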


\begin{proof}
The obvious functors $\cD_\square \to \cD_\circ$, 
$\cD^{\text{even}}_\square \to \cD^{\text{even}}_\circ$ and 
$\cD_\triangle \to \cD^{\text{even}}_\square$ are fully faithful.
For example, to see the essential surjectivity of 
$\cD_\triangle \to \cD_\square^{\text{even}}$ on objects, 
given an object of $\cD_\square^{\text{even}}$labeled by $(m,n)$, 
let $S: (m,n) \to ((m+n)/2,(m+n)/2)$ and 
$T: ((m+n)/2,(m+n)/2) \to (m,n)$ be morphisms in $\cD_\square$ 
obtained by bending strings in the right vacant space. Then 
$S\circ T = 1_{(m+n)/2,(m+n)/2}$ and $T\circ S = 1_{m,n}$ show that 
$(m,n)$ and $((m+n)/2,(m+n)/2)$ are isomorphic as objects. 
\end{proof}

Here are three special plugging operations of special interest 
in $\cD_\square$; 
composition, juxtaposition and transposition. 

\textbf{Composition} (or product) produces 
an $(l,n)$-diagram $S T$ from 
an $(m,n)$-diagram $S$ and an $(l,m)$-diagram $T$: 
($(l,m,n) = (4,2,3)$ in the figure)

\begin{figure}[h]
\input{pot-composition.tpc}  
\end{figure}

The composition satisfies the associativity law and admits 
the identity diagrams for multiplication.

\begin{figure}[h]
\input{pot-unit.tpc}  
\end{figure}

In this way, we have found another categorical structure 
for planar diagrams of box type; 
the category $\cM$ has 
natural numbers $0, 1, 2, \dots$ as objects 
with hom sets $\cM(m,n)$ consisting of 
$(m,n)$-diagrams. 
% This is just the ordinary category part inside the multicategory $\cP_\square$. 

% In other words, 
% the totality of planar tangles of box type 
% considered to be morphisms of 
% a category $\cP \cO \cT$ (or simply $\cP$) with 
% objects indexed by natural numbers $0, 1, 2, \dots$. 

\textbf{Juxtaposition} (or tensor product) produces an $(k+m,l+n)$-diagram 
$S\otimes T$ from an $(l,k)$-diagram $S$ and an $(m,n)$-diagram $T$. 

\begin{figure}[h]
\hspace{-9mm}
\input{pot-juxta.tpc}  
\end{figure}

With this operation, $\cM$ becomes a strict monoidal category 
($m\otimes n = m+n$). The unit object is $0$ with the identity morphism 
in $\cM(0,0)$ given by the empty diagram (neither inner boxes nor strings). 

Warning: monoidal categories connote multicategory structure as observed before, 
which is, however, 
%but that arizing from $\cT$ is 
different from $\cD_\square$: 
They have different classes of objects. 

\textbf{Transposition} is an involutive operation on planar diagrams of box type, 
which produces an $(n,m)$-diagram ${}^tT$ out of an $(m,n)$-diagram $T$. 

\begin{figure}[h]
%\hspace{-9mm}
\input{pot-trans.tpc}  
\end{figure}

Notice the last equality holds by  planar isotopy. 
Here are some obvious identities: 
\[
{}^t(ST) = {}^tT\,{}^tS, 
\qquad 
{}^t(S\otimes T) = {}^tT\otimes {}^tS. 
\]

With this operation, our monoidal category $\cM$ 
is furnished with a \textbf{pivotal} structure. 

From the definition, a representation of 
the multicategory $\cD_\square$ means 
a family of vector spaces $\{ P_{m,n} \}_{m, n \geq 0}$ together with 
an assignment of a linear map 
\[
\pi_T: P_{m_1,n_1} \otimes \dots \otimes P_{m_d,n_d} 
\to P_{m,n}
\]
to each morphism $T$ in $\cD_\square$, which satisfies 
\[
\pi_T(\pi_{T_1}(x_1)\otimes \dots \otimes \pi_{T_d}(x_d)) 
= \pi_{T\circ(T_1\times \dots \times T_d)}(x_1\otimes \dots \otimes x_d). 
\]

According to V.~Jones, this kind of algebraic structure is referred to 
as a \textbf{planar algebra}. 
In what follows, we use  the word `tensor category' to stand for a linear monoidal category. 

\begin{Proposition}
A representation $P = \{ P_{m,n} \}$ of $\cD_\square$ gives rise to 
a strict pivotal tensor category $\cP$ generated by a single 
self-dual object $X$: 
% Let $X$ be the object labeled by the number $1$. 
$\Hom(X^{\otimes m},X^{\otimes n}) = P_{m,n}$, composition of morphisms is given by 
$ab = \pi_C(a\otimes b)$, tensor product of morphisms is 
$a\otimes b = \pi_J(a\otimes b)$ and pivotal structure is given by transposition operation. 
(The identity morphisms are $\pi_I$.)
The construction is functorial and an intertwiner $\{ f_{m,n}: P_{m,n} \to P'_{m,n} \}$ 
between representations 
induces a monoidal functor $F: \cP \to \cP'$ preserving pivotality. 
%so that it is fully faithful: $\cH om(P,P') \cong \cH om(\cT(P), \cT(P'))$. 

Conversely, given a pivotal tensor category $\cP$ generated by  
a self-dual object $X$, 
% with a ridigity pair $\epsilon: X\otimes X \to I$, $\delta: I \to X\otimes X$,  
we can produce a representation so that 
$P_{m,n} = \Hom(X^{\otimes m}, X^{\otimes n})$. 
\end{Proposition}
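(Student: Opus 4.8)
The whole argument runs on one principle: the planar-isotopy relations built into $\cD_\square$ are, clause by clause, the axioms of a strict pivotal tensor category, and a representation merely transports them into $\cM\cV ec$. Fix in $\cD_\square$ the stacking diagram $C$, the juxtaposition diagram $J$, the through-strand diagrams $I_n\in\Hom(\,;(n,n))$ (no inner box), and the cap $e\in\Hom(\,;(2,0))$ and cup $c\in\Hom(\,;(0,2))$. Define $\cP$ by: objects $\N$, where $n$ is read as $X^{\otimes n}$ for the single object $X:=1$, so that $\cP$ is generated by $X$; $\Hom_\cP(X^{\otimes m},X^{\otimes n}):=P_{m,n}$; composition $ab:=\pi_C(a\otimes b)$; tensor product of morphisms $a\otimes b:=\pi_J(a\otimes b)$; unit object $\mathbf 1:=X^{\otimes 0}=0$; identities $1_{X^{\otimes n}}:=\pi_{I_n}$; and duality data $\mathrm{ev}:=\pi_e\colon X\otimes X\to\mathbf 1$, $\mathrm{coev}:=\pi_c\colon\mathbf 1\to X\otimes X$. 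Because each of $\pi_C,\pi_J,\pi_e,\pi_c$ is multilinear and $\pi$ sends identity morphisms to identity maps, $\cP$ is linear and every axiom is the image under $\pi$ of a diagram identity: associativity of $\circ$ and the unit laws from the associativity of stacking (noted above) and from the fact that the $I_n$ plug into $C$ to give identity diagrams; bifunctoriality and strictness of $\otimes$ from the interchange identity relating $C$ and $J$, from $J(I_m\times I_n)=I_{m+n}$, from the associativity of $J$, and from the empty diagram being a unit for $J$; and the two snake isotopies give the zig-zag identities for $(\mathrm{ev},\mathrm{coev})$, so that $X$ is self-dual.

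The pivotal structure is supplied by transposition: $T\mapsto{}^tT$ sends an $(m,n)$-diagram to an $(n,m)$-diagram and obeys ${}^t({}^tT)=T$, ${}^t(ST)={}^tT\,{}^tS$ and ${}^t(S\otimes T)={}^tT\otimes{}^tS$, so applying $\pi$ yields an identity-on-objects, $\otimes$-reversing, involutive contravariant endofunctor of $\cP$ --- precisely a strict pivotal structure compatible with the self-duality just produced (it is the pivotal structure on the diagram category $\cM$ transported through $\pi$). The construction is functorial: given an intertwiner $f=\{f_{m,n}\}$, naturality of $f$ against $C$, $J$, $I_n$, $e$ and $c$ is exactly the statement that the assignment $F$ which is the identity on objects and $f_{m,n}$ on $\Hom(X^{\otimes m},X^{\otimes n})$ is a strict monoidal functor intertwining transposition, i.e. a functor of pivotal tensor categories; and $(g\circ f)\mapsto Fg\circ Ff$ holds strictly, so $P\mapsto\cP$ is a functor.

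For the converse, I would assume (strictifying if necessary, which changes nothing up to the canonical equivalence) that $\cP$ is strict, its self-dual generator $X$ coming with fixed duality morphisms $\mathrm{ev}\colon X\otimes X\to\mathbf 1$, $\mathrm{coev}\colon\mathbf 1\to X\otimes X$ and pivotal isomorphism, and set $P_{m,n}:=\Hom_\cP(X^{\otimes m},X^{\otimes n})$. For a planar $(m,n)$-diagram $T$ with inner boxes of types $((m_j,n_j))_{1\le j\le d}$, cut $T$ along generic horizontal levels so that each slab is a juxtaposition of a through strand, a cap, a cup, or the $j$-th inner box; reading the slabs from bottom to top and interpreting a strand as $\mathrm{id}_X$, a cap as $\mathrm{ev}$, a cup as $\mathrm{coev}$ and the $j$-th box as the formal variable $x_j\in P_{m_j,n_j}$, then composing and tensoring in $\cP$, gives a morphism $\pi_T(x_1\otimes\cdots\otimes x_d)\in\Hom(X^{\otimes m},X^{\otimes n})=P_{m,n}$, multilinear in the $x_j$. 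Then $\pi$ sends $1_{(m,n)}$ to $\mathrm{id}_{P_{m,n}}$ and is multiplicative, since a slicing of $T\circ(T_1\times\cdots\times T_d)$ arises by inserting a slicing of $T_j$ in place of the $j$-th box slab of a slicing of $T$, and associativity of composition together with functoriality of $\otimes$ in $\cP$ identify the two resulting morphisms; one checks likewise that the two constructions are mutually inverse. The one non-formal ingredient --- and the step I expect to be the main obstacle --- is that $\pi_T$ does not depend on the chosen slicing, equivalently that the planar $(m,n)$-diagrams form the free strict pivotal tensor category on one self-dual object. I would either invoke the graphical-calculus coherence theorems (in the Joyal--Street / Freyd--Yetter / Reshetikhin--Turaev line, in their unbraided self-dual pivotal form) or argue directly: any two slicings of one planar-isotopy class are linked by finitely many local moves --- sliding a box or a cap/cup horizontally past a neighbour, sliding a box along a strand, and cancelling a zig-zag --- each of which is one of the axioms (the interchange law, naturality, bifunctoriality, the snake identities), and since a diagram living in a rectangle never forces a box to be rotated through a full turn, pivotality rather than sphericity suffices.
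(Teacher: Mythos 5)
Your proposal is correct and follows essentially the same route as the paper: the forward direction reads the tensor-category axioms off the plugging identities for composition, juxtaposition and transposition, and the converse interprets a planar diagram by decomposing strings into vertical segments, caps and cups (identity, $\epsilon$, $\delta$), with invariance under planar isotopy secured by the rigidity identities for unrotated boxes and by pivotality for box rotation. The independence-of-slicing issue you rightly flag as the one non-formal step is exactly the point the paper disposes of in a single sentence by appealing to those same identities.
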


\begin{proof}
Since the monoidal structure is defined in terms of special forms of plugging, 
an intertwiner induces a monoidal functor. 

Conversely, suppose that we are 
given a pivotal tensor category with a generating object $X$. 
Let $\epsilon: X\otimes X \to I$ and $\delta: I \to X\otimes X$ give 
a rigidity pair satisfying $\epsilon = {}^t\delta$. 
Given a planar diagram $T$, let $\pi_T$ be a linear map 
obtained by replacing vertical parts, upper and 
lower arcs of strings with the identity, $\delta$ and $\epsilon$ respectively. 
Then the rigidity identities ensure that $\pi_T$ is unchanged under planar isotopy 
on strings if relevant boxes are kept unrotated, while the pivotality witnesses 
the planar isotopy for rotation of boxes (see Figure~\ref{eddy}). 
The multiplicativity of $\pi$ 
for plugging is now obvious from the construction. 
\end{proof}

\begin{figure}[h]
  \centering
  \input eddy.tpc
  \caption{}
\label{eddy}
\end{figure}

\begin{Remark}~ 
\begin{enumerate}
\item 
The condition $\dim P_{0,0} = 1$ is equivalent to the simplicity of 
the unit object of the associated tensor category. 
\item 
If one starts with a representation $P$ and make $\cP$, 
then the pivotal category $\cP$ produces $P$ itself. 
If one starts with a pivotal tensor category $\cP$ with $P$ 
the associated representation and let $\cQ$ be 
the pivotal category $\cQ$ constucted from $P$, then 
the obvious monoidal functor $\cQ \to \cP$ so that 
$n \mapsto X^{\otimes n}$ gives an equivalence of pivotal tensor 
categories (it may happen that $X^{\otimes m} = X^{\otimes n}$ 
in $\cP$ for $m \not= n$ though).  
\end{enumerate}
\end{Remark}

\begin{Example}
Let $K(m,n)$ be the set of Kauffman diagrams, i.e., planar $(m,n)$-diagrams with 
neither inner boxes nor loops. 
Recall that
$|K(m,n)|$ is the $(m+n)/2$-th Catalan number if $m+n$ is even and $|K(m,n)| = 0$ otherwise. 
Let $\C[K(m,n)]$ be a free vector space of basis set $K(m,n)$. Given a complex number 
$d$, we define a representation of $\cD$ by extending the obvious action of 
planar diagrams on $K(m,n)$ with each loop replaced by $d$. 
The resultant tensor category is the so-called 
\textbf{Temperley-Lieb category} and 
denoted by $\cK_d$ in what follows. 
(See \cite{Yam1} for more information.) 
\end{Example}

\begin{Example}
Let $\cT an(m,n)$ be the set of tangles and let 
$\C[\cT an(m,n)]$ be the free vector space generated by the set $\cT an(m,n)$. 
By extending the obvious action of planar diagrams on $\cT an$ to 
$\C[\cT an]$ linearly, we have a representation of $\cD$. 
Note that $\C[\cT an(0,0)]$ is infinite-dimensional. 
%What's the name for the associated tensor category? W.B.R.~Lickorish?
\end{Example}

\section{Decoration}
The previous construction allows us to have many variants if one  
assigns various attributes to strings and boxes. 
We here discuss two kinds of them, coloring and orientation, 
which can be applied independently (i.e., at the same time or seperately). 

To be explicit, let $C$ be a set 
and call an element of $C$ a color. 
By a \textbf{colored planar diagram}, we shall mean a planar diagram $T$ 
with a color assigned to each string. 
For colored planar diagrams, 
plugging is allowed only when 
color matches  at every connecting point. 

As before, colored planar diagrams constitute 
a multicategory $\cD_C$ 
whose objects are disks or boxes with pins decorated by colors. 
For colored planar diagrams of box type, 
a strict pivotal category 
$\cM_C$ is associated so that objects in $\cM_C$ are 
the words associated with the letter set $C$, 
which are considered to be upper or lower halves 
of decorations of boxes. 
In other words, objects in $\cD_C$ are labeled by pairs of 
objects in $\cM_C$. 

\begin{Example}
Let $K(v,w)$ ($v \in C^m, w \in C^n$ with $m, n \in \N$) be 
the set of colored Kauffman diagrams. Then, given a function $d: C \to \C$, 
we obtain a representation of $\cD_C$ by making free vector spaces $\C[K(v,w)]$ 
as in the Temperley-Lieb category. The resultant tensor category is denoted by 
$\cK_d$ and referred to as the \textbf{Bisch-Jones category}. 
\end{Example}

Given a colored planar diagram $T$, we can further decorate it 
by assigning orientations to each string in $T$. 
We call such a stuff  
a (planar) \textbf{oriented diagram}
(simply pod). The operation of plugging works here for 
colored pods and we obtain again a multicategory 
$\cO\cD_C$ of pods colored by $C$, 
where objects are disks or boxes with pins decorated by 
colors and orientations. 

Associated to colored pods of box type, 
we have a pivotal monoidal category $\cO\cM_C$ 
whose objects are 
words consisting of letters in $\{ c_+, c_-; c \in C \} 
= C\times \{+,-\}$ (for a pictorial display, we assign 
$+$ (resp.~$-$) to an upward (resp.~downward) arrow 
on boundaries of boxes). The product of objects is given by the concatenation of words 
with the monoidal structure for morphisms defined by the same way as before. 

%  and, for strings 
% of colors in $C$, orientations are also specified. 
% For colored pots, $(m,n)$-boxes are decorated 
% in an obvious way 
% by sequences $(a_1,\alpha_1, \dots, a_m,\alpha_m)$ 
% and $(b_1,\beta_1, \dots,b_n, \beta_n)$, where 
% $a_i, b_j \in C$ and 
% $\alpha_i \in \{ \pm \epsilon \}$ if $a_i \in C_\epsilon$ 
% and similarly for $\beta_j$. 
% By convention, $+$ (resp.~$-$) denotes the upward 
% (resp.~downward) direction at boundaries of boxes. 

% an involution 
% (denoted by $c^*$ for $c \in C$) specified and 
% call an element in $C$ a color. 

% By a decoration of an $(m,n)$-box, we shall mean 
% an assigmnent of colors to each pods of the box. 
% A box with a decoration is called a decorated box. 
% By a colored pot, 
% we shall mean a pot with the boxes decorated 
% and strings are connected with the rule that 

% Coloring is to assign an element 
% (called a color) of a prefixed index set $C$ 
% (a color set) to each string inside a tangle and 

% Orientation is just to assign orientation to each string 
% with plugging allowed exactly for matched cases. 

% All these decorations are on strings with boxes decorated 
% in an obvious way. 

% Plugging operation is extended to colored pots 
% in an obvious way and the set of pots colored by $C$ 
% constitutes 
% a strict pivotal category $C-\cP\cO\cT$. 
% they constitute
% Clearly pots colored by $C$ constitute morphisms 
% in a category, which is denoted by $C-\cP\cO\cT$, 
% with objects given by decorated boxes. 

Given a representation of $\cD_C$ or 
$\cO\cD_C$, 
we can construct a pivotal tensor category as before. 

% By a representation of $C\text{-}\cP\cO\cT$, we shall mean 
% a family of vector spaces 
% $\{ P_{v,w} \}$ indexed by pairs of $C$-words together with 
% an assignment of a linear map 
% \[
% \pi_T: P_{v_1,w_1}\otimes \dots \otimes P_{v_d,w_d} \to P_{v,w}
% \]
% to each morphism $T$ in $C\text{-}\cP\cO\cT$ satisfying 
% \[
% \pi_T(\pi_{T_1}(x_1)\otimes \dots \otimes \pi_{T_d}(x_d)) 
% = \pi_{T\circ(T_1\times \dots \times T_d)}
% (x_1\otimes \dots \otimes x_d). 
% \]

% Similarly for the definition of representation of 
% $C\text{-}\cP\cO\cO\cT$. 

\begin{Example}
%Let $\K$ be a field, $v$ be 
For an object $x$ in $\cO\cD_C$, let 
$P_x$ be 
the free vector space (over a field) generated by the set 
\[
\bigsqcup_{d, x_1, \dots, x_d} 
\cO\cD_C(x_1\times\dots\times x_d,x) 
\]
of all colored pods having $x$ as a decoration of the outer box. 
If the plugging operation is linearly extended to these free vector spaces, 
we obtain a representation 
of $\cO\cD_C$, which is referred to as 
the \textbf{universal representation} 
because any representation of 
$\cO\cD_C$ splits through 
the universal one in a unique way. 
\end{Example}

Question: Is it possible to extract analytic entities out of the universal representation? 

\section{Half-Winding Decoration}
Related to the orientation, we here explain another kind of 
decoration on planar diagrams of box type according to \cite{FY}. 
To this end, we align directions of relevant boxes horizontally and 
every string 
(when attached to a box) perpendicular to the horizontal edges of a box. 
Let $p_0$ and $p_1$ be two end points of such a string and choose 
a smooth parameter 
$\varphi: [0,1] \to \R^2$ so that 
$\varphi(0) = p_0$ and $\varphi(1) = p_1$. 
By the assumption, 
$\displaystyle \frac{d\varphi}{dt}(0)$ and 
$\displaystyle \frac{d\varphi}{dt}(1)$ are vertical vectors. 
The \textbf{half-winding number} of the string from $p_0$ to $p_1$ is then  
an integer $w$ defined by 
\[
w = \frac{\theta(1) - \theta(0)}{\pi}, 
\]
where a continuous function $\theta(t)$ is introduced so that 
$\displaystyle \varphi(t) = \left| \frac{d\varphi}{dt}(t)\right| 
(\cos\theta(t),\sin\theta(t))$. 
Thus $w$ is even or odd according to 
$\dfrac{d\varphi}{dt}(0) \cdot \dfrac{d\varphi}{dt}(1) > 0$ or not. 

We now decorate boxes by assigning an integer to each pin. 
A diagram framed by such boxes %decorated this way 
is said to be \textbf{winding} if 
it contains no loops and each string with end points $p_0$ and $p_1$  satisfies 
\[
w = n_1 - n_0, 
\]
where $n_0$ and $n_1$ are integers attached to pins at $p_0$ and $p_1$ respectively. 

\begin{figure}[h]
  \centering
  \input{pot-halfwind.tpc}
  \caption{}
\label{half-wind}
\end{figure}

A diagram colored by a set $C$ is said to be winding if integers are assigned 
to relevant pins in such a way that the diagram is winding. 
It is immediate to see that winding diagrams in $\cD_C$ are closed under 
the operation of plugging (particularly, plugging does not produce 
loops out of winding diagrams) and we obtain 
a multicategory $\cW\cD_C$ of colored winding diagrams. 

By the following identification of left and right dual objects 
\[
(X,n) =
\begin{cases}
X^{*\dots *} &\text{if $n > 0$,}\\
X &\text{if $n=0$,}\\
^{*\dots *}X &\text{if $n < 0$,}
\end{cases}
\]
we have a one-to-one correspondence between 
representations of $\cW\cD_C$ and rigid tensor categories generated by objects 
labeled by the set $C$ as an obvious variant of the previous construction. 

Now the color set $C$ is chosen to consist of objects in a small linear category $\cL$ and  
we shall introduce a representation $\{ P_x\}$ 
of $\cW\cD_C$ ($x$ runs through objects of $\cW\cD_C$) as follows: 
$P_x = 0$ if the number of pins in $x$ is odd. 
To describe the case of even number pins, 
we consider 
a diagram of Temperley-Lieb type with its boundary decorated in a winding way and 
objects of $\cL$ assigned to the pins of the diagram, which 
is said to be admissible. To an admissible diagram $D$, we associate 
the vector space 
\[
\cL(D) = \bigotimes_j \cL_j,
\]
where $j$ indexes strings of the diagram and the vector space $\cL_j$ 
is determined by the following rule: 
% that, if the $j$-th string is bent so that the result 
% is a vertical line of an even index, then $\cL_j = \cL(a,b)$ 
% with $a$ and $b$ objects attached to the bottom and top ends respectively: 
If the $j$-th string connects a pin (colored by $a$) 
on a lower boundary and a pin (colored by $b$) on a upper boundary, 
then $\cL_j = \cL(a,b)$. 
When the $j$-th string connects pins on upper boundaries which 
are decorated by $(a,n)$ and $(b,n+1)$, 
we set 
\[
\cL_j = \begin{cases}
\cL(a,b) &\text{if $n$ is odd,}\\
\cL(b,a) &\text{if $n$ is even.}
\end{cases}
\]
When the $j$-th string connects pins on lower boundaries which 
are decorated by $(a,n)$ and $(b,n+1)$, 
we set 
\[
\cL_j = \begin{cases}
\cL(a,b) &\text{if $n$ is even,}\\
\cL(b,a) &\text{if $n$ is odd.}
\end{cases}
\]

Now set 
\[
P_{(a,k), (b,l)} = \bigoplus_D \cL(D).
\]
Here $D$ runs through winding diagrams having 
$(a,k) = \{ (a_j,k_j) \}$ and $(b,l) = \{ (b_j,l_j) \}$ 
as upper and lower decorations respectively. 

The rule of composition is the following: 

\begin{figure}[h]
  \centering
  \input{fy.tpc}
  \caption{}
\end{figure}

The figure~\ref{whirty} 
indicates that, though restrictive, the boundary decorations 
do not determine possible diagrams in a unique way. 

\begin{figure}[h]
  \centering
  \input{whirty.tpc}
  \caption{}
\label{whirty}
\end{figure}

\begin{Example}
If $\cL$ consists of one object $*$, then 
$\{ P_{(a,k),(b,l)}\}$ is the wreath product of the Temperley-Lieb 
category by the algebra $\cL(*,*)$ discussed in \cite{Jon}. 
\end{Example} 

The representation of $\cW\cD_C$ defined so far, in turn, gives rise to 
a rigid tensor category, which is denoted by $\cR[\cL]$. 
Note that $\cR[\cL]$ 
is not pivotal by the way of half-winding decoration. 

\begin{Proposition}[\cite{FY}, Theorem~3.8]
Let $\cR$ be a rigid tensor category and $F: \cL \to \cR$ be a linear functor. 
Then, $F$ is extended to a tensor-functor of $\cR[\cL]$ into $\cR$ in a unique way. 
\end{Proposition}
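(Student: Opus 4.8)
The plan is to realise $\cR[\cL]$ as the free rigid tensor category built on the linear category $\cL$: I want to show that precomposition with the canonical embedding $\cL\hookrightarrow\cR[\cL]$ — which sends an object $a$ to the word $(a,0)$ and a morphism $g\in\cL(a,b)$ to the one-string diagram labelled by $g$ — identifies tensor functors $\cR[\cL]\to\cR$ with linear functors $\cL\to\cR$. Concretely I would construct the extension $\widetilde F\colon\cR[\cL]\to\cR$ from the diagrammatic description of $\cR[\cL]$ by the same device used above for $\cD_\square$, namely reading a diagram as a morphism by replacing a straight piece of a string with an identity and an arc with an evaluation or a coevaluation; the one new feature is that, $\cR$ being only rigid and not pivotal, the half-winding numbers now dictate which of the left- or the right-rigidity morphisms of $\cR$ has to be inserted at each arc. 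Uniqueness will then follow from the remark that every morphism of $\cR[\cL]$ is produced from the morphisms of $\cL$ and the rigidity morphisms by the operations $\circ$ and $\otimes$.

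First I would define $\widetilde F$ on objects: an object of $\cR[\cL]$ is a word in the letters $(a,n)$, and I set $\widetilde F(a,n)$ to be $F(a)$, its $n$-fold right dual, or its $|n|$-fold left dual according to the sign of $n$, in accordance with the identification $(X,n)=X^{*\cdots*}$ etc.\ recorded just before the statement, extending to words by tensor product in $\cR$ and taking the coherence morphisms $m$ of $\widetilde F$ to be those assembled from the associativity and unit constraints of $\cR$. On morphisms: a morphism of $\cR[\cL]$ is a linear combination of admissible winding diagrams $D$, each decorated by an element of $\cL(D)=\bigotimes_j\cL_j$, so by multilinearity it suffices to treat a diagram whose strings carry fixed labels $g_j\in\cL_j$. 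Arranging the labels at generic pairwise-distinct heights and slicing $D$ by horizontal levels expresses $D$ as a composite of pieces, each a tensor product of straight strings together with at most one of: a cup, a cap, or a label. Send a cup or a cap to the coevaluation or the evaluation of $\cR$ for the object determined by the colour and the half-winding number of the relevant string, send a label $g_j\in\cL(a,b)$ to $F(g_j)$ (or, when the string occupies a rotated position, to the appropriate iterated dual of $F(g_j)$ in $\cR$), and compose in $\cR$ in the same order; extending multilinearly over $\bigotimes_j\cL_j$ and then over $\bigoplus_D$ gives $\widetilde F$ on hom-spaces.

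It remains to check, in order: (i) well-definedness — the outcome is independent of the slicing and invariant under the winding-preserving planar isotopies by which the diagrams of $\cW\cD_C$ are identified; just as in the $\cD_\square$ case this reduces to the rigidity (zig-zag) identities of $\cR$ together with the naturality of its constraints. (ii) $\widetilde F$ is a tensor functor respecting rigidity — vertical stacking of diagrams is sent to composition and horizontal juxtaposition to tensor product by construction, and the cups and caps of $\cR[\cL]$ are carried to a rigidity structure in $\cR$. (iii) $\widetilde F$ restricts to $F$ along $\cL\hookrightarrow\cR[\cL]$. For uniqueness, a tensor functor $G\colon\cR[\cL]\to\cR$ with $G|_{\cL}=F$ is forced on objects, since a tensor functor between rigid categories carries a rigidity pair to a rigidity pair, hence iterated duals to iterated duals up to the canonical isomorphisms, and is then forced on morphisms, since by the slicing argument every morphism of $\cR[\cL]$ is built by $\circ$ and $\otimes$ from the $F(g_j)$, identities, and cups and caps — to all of which $G$ is already committed — whence $G=\widetilde F$. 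The step I expect to be the main obstacle is (i): with $\cR$ not pivotal, $F(a)^{**}$ genuinely differs from $F(a)$, so one must follow the half-winding number along each string to ensure that left- and right-dual (co)evaluations are used consistently and that no illegitimate isotopy move (such as one rotating a cap past a cup) becomes available; this bookkeeping is exactly the role of the half-winding decoration and is what leaves $\cR[\cL]$ rigid but not pivotal.
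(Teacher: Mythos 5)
The paper itself offers no proof of this proposition: it is imported verbatim from \cite{FY} (Theorem~3.8), so there is no in-paper argument to measure yours against. That said, your sketch is the expected freeness argument given the paper's definition of $\cR[\cL]$ as the category of winding Temperley--Lieb diagrams with strings labelled by morphisms of $\cL$, and its architecture (define $\widetilde F$ by slicing a labelled diagram into elementary levels and evaluating cups, caps and labels in $\cR$; check isotopy invariance; deduce uniqueness from the fact that cups, caps and labelled vertical strings generate under $\circ$ and $\otimes$) is the right one. Two caveats. First, essentially all of the mathematical content is concentrated in your step (i): in a rigid but non-pivotal $\cR$ the zig-zag identities by themselves do not obviously account for every winding-preserving planar isotopy --- one must verify that the half-winding bookkeeping admits exactly the regular-isotopy moves that $\cR$ can absorb and excludes those (such as a full $2\pi$ rotation of a box) that would force $F(a)^{**}\cong F(a)$ --- and this verification \emph{is} the coherence theorem of Freyd and Yetter being cited; you correctly localize the difficulty there but do not discharge it, and a complete proof would need a case analysis of the elementary moves relating two slicings. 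Second, on uniqueness: a tensor functor carries a rigidity pair only to \emph{some} rigidity pair, so $G$ is pinned down on objects and on cups and caps only up to the canonical isomorphisms; the extension is therefore unique in the up-to-monoidal-natural-isomorphism sense (or on the nose only after fixing duality data in $\cR$), which is how the statement must be read. With those two points acknowledged, your proposal is a correct outline of the standard proof rather than a complete argument.
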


If the half-winding number indices are identified modulo $2$, we are reduced to 
the situation decorated by oriantation, i.e., a representation of $\cO\cM_C$. 
Let $\cP[\cL]$ be the associated pivotal tensor category.

\begin{Proposition}[\cite{FY}, Theorem~4.4]
Let $\cP$ be a pivotal tensor category and $F: \cL \to \cP$ be a linear functor. 
Then, $F$ is extended to a tensor-functor of $\cP[\cL]$ into $\cP$ in a unique way. 
\end{Proposition}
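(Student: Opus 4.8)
The plan is to construct the extension $\widetilde F \colon \cP[\cL] \to \cP$ by the same diagrammatic substitution used in the Proposition relating representations of $\cD_\square$ with strict pivotal tensor categories, now carrying the $\cL$-labels along. On objects, send the length-one word $c_+$ to $F(c)$ and $c_-$ to the dual $F(c)^*$ formed with the pivotal duality of $\cP$, and extend to arbitrary words by the monoidal structure. On morphisms, a generating element of $\cP[\cL](x,y)$ is an oriented colored Temperley--Lieb diagram $D$ together with a choice of $g_j \in \cL_j$ for each string $j$; set $\widetilde F(D,(g_j)_j)$ to be the morphism of $\cP$ obtained by replacing each vertical portion of a string by the relevant identity, each upper arc by the evaluation $\epsilon$ and each lower arc by the coevaluation $\delta$ of the corresponding object of $\cP$, inserting $F(g_j)$ along the $j$-th string, and composing as the picture dictates. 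Equivalently, and probably more economically, one may invoke the earlier Proposition (\cite{FY}, Theorem~3.8): a pivotal tensor category is in particular rigid, so $F$ extends uniquely to a tensor-functor $\cR[\cL] \to \cP$, and it remains only to see that this functor is constant on the fibres of the reduction $\cR[\cL] \to \cP[\cL]$ that identifies half-winding indices modulo $2$.

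I would then check well-definedness in three steps. First, invariance under planar isotopy of the underlying Temperley--Lieb diagram with the labelled boxes held unrotated: this is exactly the rigidity (zigzag) identities for $\epsilon$ and $\delta$ in $\cP$, just as in the proof for $\cD_\square$. Second, bilinearity and compatibility with composition in the labels, which is immediate since the $g_j$ enter only through the $F(g_j)$ and concatenation along a string matches composition in $\cL$, together with invariance under rotation of the labelled boxes, which is the pivotality of $\cP$ (the canonical identification of a morphism with its double transpose, cf.\ Figure~\ref{eddy}). Third --- the genuinely new point relative to the rigid case --- compatibility with the identification of winding indices modulo $2$: a string whose total half-winding number is even but nonzero, i.e.\ one carrying a full rotation (compare Figure~\ref{whirty}), must be sent to the same morphism of $\cP$ as the straight string with the same label. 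In a merely rigid category these are distinct endomorphisms of the ambient tensor factor, whereas in a pivotal category a $2\pi$ rotation is trivialized by the pivotal structure, so the assignment descends to the quotient. Granting this, compatibility of $\widetilde F$ with composition (vertical stacking), with tensor product (horizontal juxtaposition), and with the transpose operation follows at once from the local, picture-preserving nature of the recipe and the fact that $\epsilon$, $\delta$ and the pivotal structure of $\cP$ respect the monoidal product.

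Uniqueness is formal. Any tensor-functor $G \colon \cP[\cL] \to \cP$ restricting to $F$ on $\cL$ must send $c_+$ to $F(c)$; since a tensor-functor between rigid categories preserves duals and, by pivotality, does so canonically, it must send $c_-$ to $F(c)^*$; it must send the cup and cap generators to the coevaluation and evaluation of $\cP$, these being the components of the rigidity pair preserved by any such functor; and it must send a string labelled $g$ to $F(g)$. As these data generate $\cP[\cL]$ as a pivotal tensor category, $G = \widetilde F$. The step I expect to be the real work is the third well-definedness point: pinning down precisely which relations are imposed in passing from the $\Z$-graded $\cR[\cL]$ to the $\Z/2$-graded $\cP[\cL]$ and verifying that each holds in $\cP$ by virtue of pivotality rather than mere rigidity. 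This is essentially the content that upgrades Theorem~3.8 to Theorem~4.4; everything else is bookkeeping.
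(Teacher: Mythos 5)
The paper offers no proof of this statement: it is quoted from Freyd--Yetter as their Theorem~4.4, so there is nothing in the text to compare your argument against line by line. Your proposal is nonetheless the right reconstruction, and it is consistent with the machinery the paper does develop: the substitution recipe (identities on vertical strands, $\delta$ and $\epsilon$ on lower and upper arcs, $F(g_j)$ inserted along the $j$-th string, invariance under isotopy from the rigidity identities and under box rotation from pivotality) is exactly the labelled, oriented analogue of the paper's proof of the earlier Proposition producing a strict pivotal tensor category from a representation of $\cD_\square$, cf.\ Figure~\ref{eddy}. You also correctly isolate the one genuinely new ingredient relative to the rigid case (\cite{FY}, Theorem~3.8): passing from the $\Z$-graded half-winding decoration of $\cR[\cL]$ to the $\Z/2\Z$-graded orientation decoration of $\cP[\cL]$ identifies $(X,n)$ with $(X,n+2)$, i.e.\ $X$ with $X^{**}$, and the descent of the extended functor through this quotient is precisely what the monoidal natural isomorphism $\mathrm{id} \cong (\,\cdot\,)^{**}$ of a pivotal structure supplies; mere rigidity would not suffice. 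The only point I would press you on is in the uniqueness step: ``a tensor-functor preserves duals'' only gives preservation up to canonical isomorphism, so to conclude that $G$ sends the cups and caps of $\cP[\cL]$ to the chosen $\delta$ and $\epsilon$ of $\cP$ on the nose you must take as part of the statement that the extension is required to respect the pivotal (duality) structure --- which is evidently how the notion of tensor-functor between pivotal categories is intended here. With that convention made explicit, your outline is sound.
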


\begin{Remark}
If one replaces planar diagrams with tangles, 
analogous results are obtained on braided categories 
(\cite{FY}, Theorem~3.9 and Theorem~4.5). 
\end{Remark}

\section{Positivity}
We here work with planar diagrams of box type 
and use $v$, $w$ and so on to stand for an object 
in the associated monoidal category, whence any object of 
the multicategory is described by a pair $(v,w)$. 
Thus a representation space $P_{v,w}$ can be viewed 
as the hom-vector space of a tensor category. 
 
We now introduce two involutive operations on colored pods: 
Given a colored pod $T$, 
let $T'$ be the pod with the orientation of arrows reversed 
(colors being kept) and 
$T^*$ be the pod which is obtained as a reflection 
of $T'$ with respect to a horizontal line
(colors being kept while orientaions reflected).  
% Thus $T^c = T$ if and only if $T$ has no oriented arrows. 

\begin{figure}[h]
%\hspace{-9mm}
\input{pot-star.tpc}  
\end{figure}

Here are again obvious identities: 
\[
({}^tT)^* = {}^t(T^*), 
\quad
(ST)^* = T^*S^*, 
\quad 
(S\otimes T)^* = S^*\otimes T^*.
\]

A representation $(\pi, \{ P_{v,w} \})$ of 
$\cO\cD_C$ is called a \textbf{*-representaion} if 
each $P_{v,w}$ is a complex vector space and 
we are given conjugate-linear involutions 
$*: P_{v,w} \to P_{w,v}$ satisfying 
\[
\pi_T(x_1,\dots, x_l)^* = \pi_{T^*}(x_1^*,\dots,x_l^*). 
\]
% The definition is obviously extended to 
% $C\text{-}\cP\cO\cO\cT$ as well. 

A *-representation is a \textbf{C*-representation} if 
% the algebra of block form 
\[
\begin{pmatrix}
P_{v_1,v_1} & \dots & P_{v_1,v_n}\\
\vdots & \ddots & \vdots\\
P_{v_n,v_1} & \dots & P_{v_n,v_n}
\end{pmatrix}
\]
is a C*-algebra for any finite sequence $\{ v_1,\dots, v_n \}$. 

\begin{Example}
The universal $\C$-representation of $\cO\cD_C$
is a C*-representation in a natural way. 
\end{Example}

\section{Alternating Diagrams} 
Consider now the category $\cO\cD$ of 
pods without coloring (or monochromatic coloring). 
Thus objects are finite sequences consisting of $+$ and $-$. 
We say that the decoration of a disk 
is \textbf{alternating} if 
even numbers of $\pm$ are arranged alternatingly; 
\[
(+,-,+, \cdots,+,-)\quad\text{or}\quad  
(-,+,-, \cdots, -,+). 
\]
By an alternating pod, we shall 
mean a pod where all boxes have even number of pins and 
are decorated by $\pm$ alternatingly and circularly. 
Thus orientations of strings attached to upper and lower boundaries of 
a box coincide at the left and right ends. 
%  of upper and 
% lower boundaries of a box coincide for boxes.  
% In other words, all boxes in an alternating pot 
Here are examples of alternating decorations on inner boxes: 

\begin{figure}[h]
%\hspace{-9mm}
\input{pot-altbox.tpc}  
\end{figure}

Alternating pods again constitute a multicategory, 
which is denoted by $\cA\cD$. 
According to the shape of objects, we have three equivalent 
categories $\cA\cD_\circ$, $\cA\cD_\square$ and 
$\cA\cD_\triangle$. 
So $\cA\cD$ is a loose notation to stand for one of 
these multicategories. 

If we further restrict objects to the ones whose decoration starts 
with $+$, then we obtain the submulticategory 
$\cA\cD^+$, which is the Jones' original form of planar diagrams:
A \textbf{planar algebra} is, by definition, a representation 
$\{ P_{n,n} \}_{n \geq 0}$ of $\cA\cD_\triangle^+$ 
satisfying $\dim P_{0,0} = 1$. 

% \begin{Example}
% Consider $\cP\cO\cO\cT$ (without coloring) and let 
% $\cA\cP\cO\cT$ be the full subcategory of 
% $\cP\cO\cO\cT$ with objects given by words where $+$ and 
% $-$ are arranged alternatively. Thus objects in 
% $\cA\cP\cO\cT$ are parametrized by the number of strings 
% together with the orientation of the initial string.  
% \end{Example}

% There are also decorations which can be described in terms 
% of boxes. For example, in the orientation decoration, 
% we restrict orientations of pods sticked to boxes 
% in such a way that directions of arrows alternate when 
% they are counted circularly. 
% In this way, some of previous operations on pots 
% get restrictions for its applicability. 
% Eventhough, we keep assuming that pots in consideration 
% constitute a category: objects are decorated boxes and 
% decorated pots in consideration are closed 
% under taking composition. Also identity tangles are included. 

% For other operations 

% Regular $\cP\cO\cT$ and even $\cP\cO\cT$. 

% \begin{Lemma}
% Let $\cP$ be an even pot category with $\cP_r$ 
% its subcategory of regular pots. 
% Then any representation of $\cP_r$ is extended to 
% a representation of $\cP$ and its extension is unique. 
% \end{Lemma}

% The category $\cA\cP\cO\cT$ of alternating pots admits 
% two subcategories $\cA\cP\cO\cT_\pm$ by specifying 
% the direction for initial pods of boxes: 
% Thus $\cA\cP\cO\cT_+$ consists of exactly 
% the planar tangles based on which V.~Jones 
% introduced his planar algebra. 

% \begin{figure}[h]
% %\hspace{-9mm}
% \input{pot-abox.tpc}  
% \end{figure}

We shall now deal with representations of 
$\cA\cD_+$ satisfying 
\[
\pi_T = d^l \pi_{T_0}, 
% \pi_T = d_+^{l_+} d_-^{l_-} \pi_{T_0}, 
\]
where $d = d_-$ is a scalar, % some scalars, 
$l$ is the number of anticlockwise loops and 
$T_0$ is the pod obtained from $T$ by removing 
all the loops of anticlockwise orientation. 

\begin{figure}[h]
%\hspace{-9mm}
\input{pot-circle.tpc}  
\end{figure}

\begin{Lemma}
Under the assumption that $d \not= 0$, 
any representation of $\cA\cD_+$ 
is extended to a representation 
of $\cA\cD$ and the extension is unique. 
\end{Lemma}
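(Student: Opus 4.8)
The plan is to build the extension by explicitly describing how a representation $\{P_x\}$ of $\cA\cD_+$ should act on objects and morphisms of $\cA\cD$ that start with $-$, and then to verify multiplicativity. The key observation is that every object of $\cA\cD$ is either a $+$-starting alternating word or a $-$-starting one, and the two are interchanged by a single canonical operation: given a $-$-starting alternating word $v = (-,+,\dots,-,+)$, there is a distinguished morphism in $\cA\cD$ — a ``rotation by one'' tangle, call it $\rho_v$ — which connects it to the $+$-starting word $v^\sharp$ obtained by cyclically shifting, together with an inverse $\rho_v^{-1}$ built from strings that wind the other way. More precisely, since $d \neq 0$, one uses the Markov-type ``rotation'' diagrams: $\rho_v \circ \rho_v^{-1}$ and $\rho_v^{-1}\circ\rho_v$ each equal the identity tangle up to an anticlockwise loop, hence up to a factor $d$. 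So in any representation of $\cA\cD_+$ these become mutually inverse maps after rescaling. This exhibits every $-$-starting object as isomorphic, inside $\cA\cD$, to a $+$-starting one.

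**Construction of the extension.** First I would fix, for each $-$-starting alternating object $v$, such a rotation isomorphism $v \cong v^\sharp$ in $\cA\cD$, and define $\widetilde P_v := P_{v^\sharp}$ (using the representation on $\cA\cD_+$, where it is defined). For a morphism $T \colon x_1 \times \dots \times x_d \to x$ in $\cA\cD$ with arbitrary (mixed $\pm$-starting) inputs and output, I would define $\widetilde\pi_T$ by pre- and post-composing $T$ with the chosen rotation isomorphisms on each leg and on the output, so that the resulting tangle lies entirely in $\cA\cD_+$, and then applying the given $\pi$. Concretely, if $r_x \colon x \to x'$ denotes the chosen rotation tangle carrying any object to its $+$-starting representative, set
\[
\widetilde\pi_T \;=\; \pi_{\,r_x \circ T \circ (r_{x_1}^{-1}\times\dots\times r_{x_d}^{-1})}\,,
\]
where the inner tangle is genuinely in $\cA\cD_+$. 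One then checks this agrees with $\pi_T$ when $T$ is already in $\cA\cD_+$ (here the relation $\pi_T = d^l\pi_{T_0}$ is used to cancel the loops created by $r_x\circ r_x^{-1}$), and that it is multiplicative under plugging: composing two such expressions, the interior rotation factors $r_{x}^{-1}\circ r_x$ on the glued legs produce identity-up-to-loops, and again the loop-removal relation $\pi = d^l\pi_{(\cdot)_0}$ absorbs the scalar. This gives a well-defined functor $\widetilde\pi \colon \cA\cD \to \cM\cV ec$ restricting to $\pi$.

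**Uniqueness.** For uniqueness, suppose $\widetilde\pi'$ is another extension. Since every object $v$ of $\cA\cD$ admits the rotation isomorphism $r_v$ to a $+$-starting object, and $\widetilde\pi'$ must send $r_v$ to an invertible linear map (its inverse being the image of $r_v^{-1}$, because $r_v r_v^{-1}$ and $r_v^{-1}r_v$ are identities up to anticlockwise loops, and on $\cA\cD_+$ the value of a loop is forced to be multiplication by $d$ — here $d\neq 0$ is essential), the space $\widetilde\pi'_v$ is forced up to canonical isomorphism to be $P_{v^\sharp}$, and then for any morphism $T$, functoriality forces $\widetilde\pi'_T = \widetilde\pi'_{r_x}\circ \pi_{r_x T r_{x_i}^{-1}} \circ (\dots)$, which is exactly $\widetilde\pi_T$. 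So the two extensions coincide on the nose once one identifies the representing spaces.

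**Main obstacle.** The routine part is the bookkeeping of rotation tangles; the genuinely delicate point is checking \emph{coherence / well-definedness}: the value $\widetilde\pi_T$ must not depend on the particular choice of rotation representatives $r_x$, and different ways of rotating a mixed tangle into $\cA\cD_+$ must give the same linear map. This reduces to showing that any two rotation tangles between the same pair of objects differ by a ``full rotation'' tangle, which acts on a $+$-starting planar-algebra-type representation by a known scalar (a power of $d$ times possibly a sign or unit, coming from the closed loops created) — so the hard work is isolating exactly which closed loops appear when two rotations are compared and confirming the loop-removal hypothesis $\pi_T = d^l \pi_{T_0}$ makes all these scalars cancel consistently. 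I expect this loop-accounting, rather than the formal extension procedure, to be where the real content of the proof lies.
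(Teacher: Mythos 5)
Your route is genuinely different from the paper's, and the difference matters. The paper does not use rotation isomorphisms at all: it relates the two parities by the non-invertible operation $a \mapsto 1\otimes a$ (adjoining one through-string on the left), which sends $P^-_{m,n}$ into $P^+_{m+1,n+1}$, and recovers $a$ by the ``closing-off'' tangle $C$ via the identity $\pi_C(1\otimes a)=da$. This is exactly where $d\neq 0$ enters: it makes $1\otimes(-)$ injective with left inverse $\frac1d\pi_C$, identifies the image as the $d$-eigenspace of $\pi_{1\otimes C}$ (a morphism of $\cA\cD_+$, hence determined by the given data), and supplies the normalizing factors $d^{-e}$ in the formula $\pi_T=d^{-e}\pi_{T\circ(C_1^*\times\dots\times C_d^*)}$ that both forces uniqueness and defines the extension. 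The multiplicativity check then reduces to counting the anticlockwise loops created when the $C$'s and their mates collide under plugging, which is where the standing hypothesis $\pi_T=d^l\pi_{T_0}$ is used.

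The concrete problems with your version are in the loop accounting, which you yourself flag as the crux but get wrong in both places where you invoke it. First, composing a one-click rotation tangle with its reverse is planar-isotopic to the identity spoke diagram and creates no closed loop whatsoever, so no factor of $d$ appears and no rescaling is needed; your $\rho_v$ would be an honest isomorphism in $\cA\cD$. Second, two different rotations between the same pair of objects differ by an even rotation, which is not isotopic to a union of the identity with closed loops and does not act by a power of $d$ on a general representation --- it is a nontrivial operator --- so your proposed resolution of the well-definedness question fails as stated (though fixing one rotation per object once and for all sidesteps it, at the cost of only getting uniqueness up to isomorphism). The net effect is that your argument, as written, never actually uses either the hypothesis $d\neq 0$ or the loop-removal relation, whereas the Lemma is stated under those hypotheses and the subsequent Corollary on planar algebras carries the corresponding condition $\text{l-}\dim(X)\neq 0$. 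That mismatch is the real gap you must close: either verify carefully that the rotation tangles are legitimate, mutually inverse morphisms of $\cA\cD$ compatible with the alternating orientations (in which case you should explain why the hypotheses appear to be superfluous and reconcile that with the paper's use of them), or accept that the parity change has to be implemented by the non-invertible $1\otimes(-)$ as in the paper, where $d\neq 0$ is doing indispensable work.
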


\begin{proof}
Assume that we are given a representation $(\pi, P)$ of $\cA\cD$. 
According to the parity of label objects, the representation space 
$P$ is split into two families $\{ P_{m,n}^\pm \}$. 
Let $C$ be a pod in $\cA\cD$ indicated by Figure~\ref{cpot}. 
\begin{figure}
\input{cpot.tpc}
\caption{}
\label{cpot} 
\end{figure}
From the identity $\pi_C(1\otimes a) = d a$ for $a \in P^-_{m,n}$, 
we see that the map $P_{m,n}^- \ni a \mapsto 1\otimes a \in P_{m+1,n+1}^+$ 
is injective with its image specified by 
\[
1\otimes P_{m,n}^- = \{ a \in P_{m+1,n+1}^+; \pi_{1\otimes C}(a) = d a \}. 
\]
If we regard $P_{m,n}^- \subset P_{m+1,n+1}^+$ by this imbedding, 
$\pi_T$ for a morphism $T \in \cA\cD$ is identified with 
\[
\frac{1}{d^e} \pi_{T\circ(C_1^*\times \dots \times C_d^*)} 
\quad\text{or }\ 
\frac{1}{d^e} \pi_{(1\otimes T)\circ(C_1^*\times \dots \times C_d^*)} 
\]
depending on the parity of the output object of $T$. 
Here $C_j^* = 1$ or $C_j^* = C$ according to the parity of the $j$-th inner box and 
$e$ denotes the number of inner boxes of odd (= negative) parity in $T$. 
Note that these reinterpreted $T$'s are morphisms in $\cA\cD_+$. 
In this way, we have seen that $\pi$ is determined by the restriction to 
$\cA\cD_+$. 

Conversely, starting with a representation $(\pi^+, P^+)$ of $\cA\cD_+$, 
we set 
\[
P_{m,n}^- = \{ \pi^+_{1\otimes C}(a); a \in P_{m+1,n+1}^+\} \subset 
P_{m+1,n+1}^+
\]
and define a multilinear map $\pi_T$  by the above relation: 
\[
\pi_T = \frac{1}{d^e} \pi^+_{T\circ(C_1^*\times \dots \times C_d^*)} 
\quad 
\text{or }\ 
\frac{1}{d^e} \pi^+_{(1\otimes T)\circ(C_1^*\times \dots \times C_d^*)}. 
\]
From the definition, $\pi_T = \pi^+_T$ if $T$ is a morphism in $\cA\cD_+$. 

To see that $\pi$ is a representation of $\cA\cD$, we need to show that 
$\pi_T\circ(\pi_{T_1}\otimes \dots \otimes \pi_{T_d}) = 
\pi_{T\circ(T_1\times \dots \times T_d)}$. 

When the output object of $T$ has even parity, 
\[
\pi_T\circ(\pi_{T_1}\otimes \dots \otimes \pi_{T_d}) = 
\frac{1}{d^e} \pi^+_{T\circ(C_1^*\times \dots \times C_d^*)} 
\circ(\pi_{T_1}\otimes \dots \otimes \pi_{T_d})
\]
and we look into the plugging at the box such that $C_j^* = C$. 
Then the output parity of $T_j$ is odd and we have 
$\pi_{T_j} = d^{-e_j}\pi^+_{(1\otimes T_j)\circ(C^*\times \dots \times C^*)}$, 
which is used in the above plugging (Figure~\ref{plug}) 
to see that it results in 
\[
\frac{1}{d^e} \pi^+_{T\circ(C_1^*\times \dots \times C_d^*)} \circ 
(\pi_{T_1}\otimes \dots \otimes \pi_{T_d}) 
= \frac{1}{d^f} \pi_{T\circ(T_1\times \dots \times T_d)\circ (C^*\times \dots \times C^*)}, 
\]
where $f = \sum_j e_j$ denotes the number of inner boxes of odd parity 
inside $T_1, \dots, T_d$. 

A similar argument works for $T$ having the outer box of odd parity, 
proving the associativity of $\pi$ for plugging. 
\begin{figure}
\input{plug.tpc}
\caption{}
\label{plug}
\end{figure}
\end{proof}

\begin{Theorem}
Representations of $\cA\cD$ are in one-to-one 
correspondence with 
singly generated pivotal linear bicategories. 
\end{Theorem}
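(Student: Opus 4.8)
The plan is to upgrade the correspondence of the earlier Proposition --- between representations of $\cD_\square$ and singly generated strict pivotal tensor categories --- to the two-coloured setting of alternating pods, where the single self-dual generating object is replaced by a generating $1$-cell $X$ between two distinct $0$-cells and its transpose $\bar X={}^tX$. The first step is to fix the dictionary, working with the diagonal model $\cA\cD_\triangle$. Let $\cB$ have two $0$-cells $+$ and $-$, a generating $1$-cell $X\colon +\to-$, its transpose $\bar X\colon -\to+$, and as its general $1$-cells the alternating words in $X$ and $\bar X$ together with the empty words $1_+,1_-$; under a suitable reading of the $\pm$-decoration of a box as such a word, the objects of $\cA\cD$ are identified with the parallel pairs $(v,w)$ of $1$-cells of $\cB$, ranging over all four hom-categories $\cB(\pm,\pm)$. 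Given a representation $(\pi,\{P_{v,w}\})$ of $\cA\cD$, take the space of $2$-cells $v\Rightarrow w$ to be $P_{v,w}$, the vertical composition of $2$-cells to be $\pi$ applied to the vertical-stacking (composition) pod, the horizontal composition of $2$-cells to be $\pi$ applied to the side-by-side (juxtaposition) pod, the horizontal composition of $1$-cells to be concatenation of words, and the identity $2$-cells to be the maps $\pi_I$. Associativity of the two compositions, the interchange law and the unit laws are then instances of the multiplicativity of $\pi$ together with invariance under planar isotopy, exactly as in the proof of the earlier Proposition; in fact $\cB$ comes out strict, so here ``bicategory'' may be read as ``$2$-category''.

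For the pivotal structure, equip each $1$-cell $v$ with the dual ${}^tv$ (the word read backwards with $X\leftrightarrow\bar X$), together with $2$-cells $\delta_v\colon 1\Rightarrow{}^tv\otimes v$ and $\e_v\colon v\otimes{}^tv\Rightarrow 1$ built from nested cap and cup pods; the rigidity (zig-zag) identities hold because the corresponding tangles are planar isotopic, and the pivotal coherence --- that ${}^t({}^tv)=v$ and that transposition is compatible with horizontal composition and with the duality data --- is witnessed by the planar isotopy rotating boxes, just as in Figure~\ref{eddy}. Thus $\cB$ is a strict pivotal linear bicategory, singly generated by $X$, the two $0$-cells being forced by the alternation of $\pm$. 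The assignment is functorial: an intertwiner $\{f_{v,w}\}$ of representations is a compatible family of linear maps on $2$-cell spaces, hence a pivotal linear $2$-functor which is the identity on $0$- and $1$-cells --- the bicategorical analogue of the pullback functors of the earlier Propositions.

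Conversely, given a singly generated pivotal linear bicategory $\cB$ with generator $X\colon+\to-$, identify an object of $\cA\cD$ with a parallel pair $(v,w)$ of $1$-cells as above and set $P_{v,w}$ equal to the space of $2$-cells $v\Rightarrow w$; to a pod $T$ assign the multilinear map $\pi_T$ obtained by replacing each vertical segment of a string with an identity $2$-cell, each upper (resp.\ lower) arc with the appropriate $\delta$ (resp.\ $\e$), each inner box with the $2$-cell decorating it, horizontal juxtaposition with horizontal composition and vertical stacking with vertical composition. As in the earlier proof, the rigidity identities make $\pi_T$ insensitive to planar isotopies that keep boxes unrotated, while pivotality accounts for box rotations, and the multiplicativity of $\pi$ for plugging is then immediate. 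One then checks that the two constructions are mutually inverse up to the evident equivalences: from a representation one recovers it on the nose, and from a pivotal bicategory one recovers an equivalent one, the only slack being that distinct alternating words may name equal $1$-cells in $\cB$ --- the same phenomenon noted in the Remark after the earlier Proposition. Combining with the preceding Lemma in the case where the loop scalar $d=d_-$ is invertible reinterprets all of this in terms of representations of $\cA\cD_+$, thereby identifying planar algebras with singly generated bicategories as announced in the introduction.

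I expect the real work to be the second step: verifying the pivotal axioms in full --- that the cap/cup pods genuinely satisfy the zig-zag identities and that transposition descends to an honest pivotal structure on $\cB$, with box rotations controlled by pivotality and not merely by rigidity --- together with keeping straight the left/right duals and the source/target conventions when translating alternating words into composable $1$-cells. Modulo that, the forward construction and the converse are routine transcriptions of the arguments already carried out for $\cD_\square$.
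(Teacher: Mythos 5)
Your proposal is correct and follows essentially the route the paper intends: the Theorem is stated without a separate proof precisely because it is the two-$0$-cell (alternating, $\pm$-decorated) adaptation of the earlier Proposition relating representations of $\cD_\square$ to singly generated pivotal tensor categories, with the same cap/cup--rigidity--pivotality verifications and the same caveat (from the Remark) that the correspondence is one-to-one only up to the identification of distinct alternating words naming equal $1$-cells. Your use of the preceding Lemma to pass to $\cA\cD_+$ and planar algebras likewise matches the paper's Corollary.
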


\begin{Corollary}
Planar algebras are in one-to-one correspondence 
with singly generated pivotal linear bicategories 
with simple unit objects and satisfying $\text{l-}\dim(X) \not= 0$. 
% (or $\dim(X^*)$, needs checking). 
($\text{l-}\dim$ refers to the left dimension.)
\end{Corollary}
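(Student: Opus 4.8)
The plan is to extract the Corollary from the Theorem by feeding in the extension Lemma and doing the bookkeeping on shadings. First I would unwind the definitions. A planar algebra is a representation of $\cA\cD_\triangle^+$ with $\dim P_{0,0}=1$; since $\cA\cD_\triangle^+$, $\cA\cD_\square^+$ and $\cA\cD_\circ^+$ are equivalent multicategories (the same string-bending argument as in the Proposition comparing $\cD_\triangle$, $\cD_\square$, $\cD_\circ$), the Corollary on representation categories lets us regard a planar algebra as nothing but a representation $(\pi^+,P^+)$ of $\cA\cD_+$ with $\dim P^+_{0,0}=1$. On the other side, a representation of $\cA\cD$ produces, by the Theorem, a singly generated pivotal linear bicategory $\cB$, with generating $1$-morphism $X$ and two $0$-cells (the two phases of the alternating shading), which I call $a$ and $b$.

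The key numerical observation is that the two loop parameters are categorical dimensions of $X$: closing up the rigidity pair $\epsilon,\delta$ for $X$ with its two possible orientations gives precisely the diagrams labelled $d_+$ and $d_-$, so that $d_-=\text{l-}\dim(X)$ and $d_+=\text{r-}\dim(X)$ in $\cB$. Hence the hypothesis $\text{l-}\dim(X)\neq 0$ is literally the hypothesis $d\neq 0$ of the Lemma. Under that hypothesis the Lemma supplies a bijection, functorial and with functorial inverse, between representations of $\cA\cD_+$ and representations of $\cA\cD$; composing with the Theorem we obtain a bijection between representations of $\cA\cD_+$ with $d\neq 0$ and singly generated pivotal linear bicategories with $\text{l-}\dim(X)\neq 0$, under which intertwiners correspond to linear $2$-functors respecting the pivotal structure.

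It remains to match the condition $\dim P^+_{0,0}=1$ with simplicity of the two unit $1$-morphisms $1_a$ and $1_b$ of $\cB$. One of them, say $1_a$, has $\End_\cB(1_a)=P^+_{0,0}$ by construction, so $\dim P^+_{0,0}=1$ says exactly that $1_a$ is simple; this is the bicategorical form of the Remark following the pivotal-tensor-category Proposition, and conversely simplicity of $1_a$ forces $\dim P^+_{0,0}=1$ trivially. For the other, $\End_\cB(1_b)=P^-_{0,0}$, where $P^-$ is the family produced by the Lemma's extension; surrounding a diagram by an $X$--$X^*$ annulus provides maps $\End_\cB(1_b)\rightleftarrows\End_\cB(1_a)$ whose composite on $\End_\cB(1_b)$ is multiplication by $d=\text{l-}\dim(X)$ (this is the content of the relation $\pi_C(1\otimes a)=da$ used in the Lemma), so when $d\neq 0$ one of the two maps is injective and $1\le\dim\End_\cB(1_b)\le\dim\End_\cB(1_a)=1$; thus $1_b$ is simple as well. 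Functoriality of the whole construction then upgrades the bijection to an honest equivalence.

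The step I expect to be the real obstacle is this last matching: pinning down which shading is $a$ and which is $b$ so that the Lemma's bijection and the $0$-cells of $\cB$ line up consistently, and checking that the circle-over move relating $\End_\cB(1_a)$ and $\End_\cB(1_b)$ contributes exactly one factor of $d$ with the anticlockwise (rather than clockwise) orientation. Everything else is a transcription of the Theorem, the Lemma, and the earlier shape equivalences.
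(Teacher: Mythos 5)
Your proposal is correct and follows exactly the route the paper intends: the Corollary is stated without an explicit proof, but the preceding Lemma (extension from $\cA\cD_+$ to $\cA\cD$ when $d=d_-\neq 0$) and the Theorem are positioned precisely so that the Corollary is their combination, with $d_-$ identified as $\text{l-}\dim(X)$ and $\dim P_{0,0}=1$ matched to simplicity of the unit $1$-morphisms. Your extra care about the second unit object (the factor-of-$d$ composite relating $\End(1_a)$ and $\End(1_b)$) fills in a detail the paper leaves tacit, but it is the same argument, not a different one.
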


\begin{Corollary}
Planar C*-algebras are in one-to-one correspondence 
with singly generated rigid C*-bicategories 
with simple unit object. 
\end{Corollary}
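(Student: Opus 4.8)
The plan is to layer the C*-structure onto the bijection of the preceding Corollary, which already identifies planar algebras with singly generated pivotal linear bicategories having simple unit object and $\text{l-}\dim(X)\neq 0$. Since alternating pods form a submulticategory of the oriented diagrams $\cO\cD$ (monochromatic coloring), the operations $T\mapsto{}^tT$, $T\mapsto T^*$ and the notions of $*$-representation and C*-representation from the Positivity section apply verbatim to $\cA\cD$; here a \emph{planar C*-algebra} means a planar algebra whose canonical extension to $\cA\cD$ (furnished by the Lemma, $d\neq 0$) is a C*-representation. First I would show that, under the correspondence of the Theorem, a $*$-structure on a representation of $\cA\cD$ corresponds exactly to a dagger structure on the associated pivotal linear bicategory: a conjugate-linear involution on $2$-morphisms, contravariant for vertical composition, covariant for horizontal composition, and intertwining the pivotal data. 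This is a direct translation of the recorded identities $({}^tT)^* = {}^t(T^*)$, $(ST)^* = T^*S^*$, $(S\otimes T)^* = S^*\otimes T^*$ together with $\pi_T(x_1,\dots,x_l)^* = \pi_{T^*}(x_1^*,\dots,x_l^*)$.

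Next I would identify the C*-condition. The matrix $*$-algebra $\bigl(P_{v_i,v_j}\bigr)_{i,j}$ appearing in the definition of a C*-representation is, after this translation, precisely the $2$-endomorphism algebra of the $1$-morphism $\bigoplus_i v_i$ of the bicategory, with its dagger as involution; hence "C*-representation of $\cA\cD$" is the same as "the pivotal linear bicategory is a C*-bicategory" (all hom-matrix $*$-algebras are C*-algebras). Combining this with the previous Corollary, planar C*-algebras correspond to singly generated pivotal C*-bicategories with simple unit object and $\text{l-}\dim(X)\neq 0$.

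It then remains to see that, in the C*-setting, the hypotheses "pivotal" and "$\text{l-}\dim(X)\neq 0$" amount to the same thing as "rigid". In a C*-bicategory the loop values $d_\pm$ are norms of positive $2$-morphisms (traces of $1$ via evaluation and coevaluation), so $d = d_- > 0$ and $\text{l-}\dim(X)\neq 0$ holds automatically. A pivotal C*-bicategory is in particular rigid, the pivotal data providing left and right adjoints of the generator; conversely a rigid C*-bicategory carries a canonical dagger-compatible pivotal (indeed spherical) structure, obtained from a standard solution of the conjugate equations — choose $\delta\colon I\to X\otimes X$ and $\epsilon = {}^t\delta$ with $\delta^*\delta = \epsilon\epsilon^* = d\cdot 1$. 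Thus "singly generated pivotal C*-bicategory with simple unit and $\text{l-}\dim(X)\neq 0$" and "singly generated rigid C*-bicategory with simple unit" describe the same class, and the dagger tensor functors between them match those of the previous Corollary, yielding the asserted bijection.

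The main obstacle will be the positivity bookkeeping in the direction from a rigid C*-bicategory back to a C*-representation: one must check that the pod action built from a \emph{standard} solution $(\delta,\epsilon)$ as in the Proposition restricts on each finite matrix block to a $*$-homomorphism with norm-closed image, so that the block is genuinely a C*-algebra. Closedness is automatic because rigid C*-categories have finite-dimensional hom-spaces; the $*$-compatibility follows from the conjugate (Frobenius) relations, which say precisely that bending a string with $\delta$ and with $\epsilon = \delta^*$ are adjoint operations, so that reflection $T\mapsto T^*$ is implemented by the dagger. One should also verify that passing through the extension Lemma preserves everything — that $1\otimes P^-_{m,n}\subset P^+_{m+1,n+1}$ is a $*$-subspace and the renormalizing factors $d^{-e}$ are positive — which is immediate once $d>0$.
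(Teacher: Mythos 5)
The paper states this Corollary (like the Theorem and the Corollary before it) with no proof at all, so there is nothing to compare line by line; judged on its own, your argument is correct and is exactly the route the paper's development implies: transport the $*$- and C*-conditions of the Positivity section through the extension Lemma and the previous Corollary, observe that positivity forces $d=d_->0$ so the condition $\text{l-}\dim(X)\neq 0$ becomes vacuous, and trade ``pivotal'' for ``rigid'' via standard solutions of the conjugate equations (which is also why finite-dimensionality of the hom-spaces, hence norm-closedness of the matrix blocks, is automatic given the simple unit). The only point I would tighten is the identification of the matrix $*$-algebra $\bigl(P_{v_i,v_j}\bigr)_{i,j}$ with $\End\bigl(\bigoplus_i v_i\bigr)$, which presumes direct sums of $1$-morphisms; it is cleaner to treat that matrix algebra directly as the amplification condition defining a C*-bicategory, since the bicategory produced from a planar algebra need not admit direct sums.
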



\begin{thebibliography}{20}
\bibitem{Dav}
A.A.~Davydov, Monoidal categories, 
J.~Math.~Sciences, 88(1998), 457--519. 
\bibitem{FY}
Freyd and Yetter, Coherence theorems via knot theory, 
J.~Pure Appl.~Alg., 78(1992), 49--
Barnett and Westbury, Spherical categories, Adv.~Math., 
357(1999), 357--
\bibitem{Gho} 
S.K.~Ghosh, Planar algebras: a category theoretical point of view, 
arXiv:0810.4186v1.
\bibitem{Jon}
V.F.R.~Jones, Planar algebras. I. 
\bibitem{Lei}
T.~Leinster, Higher Operads, Higher Categories, 
Cambridge University Press, 2004. 
\bibitem{Yam1}
S.~Yamagami, A categorical and diagrammatical approach to Temperley-Lieb algebras, 
arXiv:math/0405267. 
\bibitem{Yam2}
\underline{\phantom{Yamagami}}, 
Fiber functors on Temperley-Lieb categories, 
arXiv:math/0405517. 
\bibitem{Yam3}
\underline{\phantom{Yamagami}}, 
Oriented Kauffman diagrams and universal quantum groups, 
arXiv:math/0602117. 
\end{thebibliography}
\end{document}